\documentclass[12pt]{amsart}
\usepackage{amssymb,latexsym}
\usepackage{amsfonts}
\usepackage{amsmath}
\usepackage[colorlinks,linkcolor=blue,anchorcolor=blue,citecolor=blue]{hyperref}
\usepackage{algorithm}
\usepackage{enumerate}
\usepackage{algpseudocode}
\usepackage{verbatim}
\usepackage{graphicx}

\newcommand\C{{\mathbb C}}

\newcommand\Z{{\mathbb Z}}

\newtheorem{theorem}{Theorem}[section]
\newtheorem{lemma}[theorem]{Lemma}

\newtheorem{corollary}[theorem]{Corollary}

\theoremstyle{definition}

\newtheorem{remark}[theorem]{Remark}

\numberwithin{equation}{section}


\newcommand\Res{\mathrm{Res}}

\newcommand\q{\mathfrak{q}}

\begin{document}

\title[Lucas and Lehmer sequences]{On the Lucas and Lehmer sequences in Dedekind domains}

\author{Xiumei Li}
\address{School of Mathematical Sciences, Qufu Normal University, Qufu, 273165, China}
\email{lxiumei2013@qfnu.edu.cn}

\author{Giulio Peruginelli}
\address{Department of Mathematics, University of Padova, Via Trieste, 63, 35121 Padova, Italy}
\email{gperugin@math.unipd.it}

\author{Min Sha}
\address{\parbox{\linewidth}{School of Mathematical Sciences, South China Normal University, Guangzhou, 510631, China \\ 
Key Laboratory of Applied Mathematics (Putian University), Fujian Province University, Fujian Putian, 351100, China}}
\email{min.sha@m.scnu.edu.cn}

\subjclass[2020]{11A41, 11B39, 13F05}

\keywords{Primitive divisor, Zsigmondy's theorem, Lucas sequence, Lehmer sequence, Dedekind domain, function field}

\begin{abstract}
In this paper, we first obtain  the strong divisibility property for the Lucas and Lehmer sequences
in Dedekind domains, and then establish analogues of Zsigmondy's theorem and the primitive divisor results for such sequences in function fields.
\end{abstract}

\maketitle

\section{Introduction}

\subsection{Background and motivation}
Given a sequence $(a_n)_{n \ge 1}$ of the rational integers $\Z$,
it is called a \textit{divisibility sequence} if $a_m \mid a_n$ whenever $m \mid n$,
and it is called a \textit{strong divisibility sequence} if $\gcd(a_m, a_n) = a_j$ with $j = \gcd(m,n)$ for any positive integers $m, n$.
In addition, for $n \ge 2$, a prime divisor of the $n$-th term $a_n$ is called \textit{primitive} if it does not divide $a_k$ for any $k$ with $1 \le k <n$.
The history of studying primitive prime divisors started in the 19th century with work of Bang \cite{Bang} and Zsigmondy \cite{Zsig},
which is now called Zsigmondy's theorem and says that
every term beyond the sixth in the sequence
$(a^n - b^n)_{n \ge 1}$ has a primitive prime divisor, where $a,b$ are  positive coprime integers.
This theorem was independently rediscovered by Birkhoff and Vandiver \cite{BV}.

Carmichael \cite{Car} established an analogue of Zsigmondy's theorem for the Lucas sequence $((a^n - b^n)/(a-b))_{n \ge 1}$,
where $a, b$ are real algebraic integers such that $a/b$ is not a root of unity, and $a+b$ and $ab$ are coprime integers in $\Z$.
Ward \cite{Ward} obtained such a result for the Lehmer sequence $(s_n)_{n \ge 1}$ with $s_n = (a^n - b^n)/(a-b)$ for odd $n$
and $s_n = (a^n - b^n)/(a^2-b^2)$ for even $n$, where $a,b$ are real, and $(a+b)^2$ and $ab$ are coprime integers in $\Z$.
All these results, including Zsigmondy's theorem, were extended to any number field
(that is, $a$ and $b$ are relatively prime integers of a number field such that $a/b$ is not a root of unity)
by Schinzel \cite{Sch} 
in an effective but not explicit manner (see \cite{PS} for an earlier work), 
which was first made explicitly by Stewart \cite{Stewart}. 
Furthermore, Voutier \cite[Conjecture 1]{Voutier} conjectured that for any integer $n > 30$, the $n$-th term of a Lucas or Lehmer sequence always has a primitive prime divisor; and then in 2001, Bilu, Hanrot and Voutier \cite{BHV} proved this conjecture and so listed all the Lucas and Lehmer numbers without primitive prime divisor. 
We refer to \cite[Section 6.1]{BR} for more history about this topic. 

Recently, Flatters and Ward \cite{FW} found an analogue of Zsigmondy's theorem for a polynomial sequence $(f^n - g^n)_{n \ge 1}$,
where $f,g$ are two coprime polynomials in a polynomial ring $K[x]$ in one variable $x$ over any arbitrary field $K$.
This was recently extended to the polynomial rings of several variables over any arbitrary field by one of us; see \cite{Sha}.
Moreover, the author in \cite{Sha} obtained the strong divisibility property and the primitive divisor results for the Lucas and Lehmer sequences
in the polynomial rings of several variables. 
Most recently, as a special case of Lucas sequences, the generalized Fibonacci sequences of polynomials over finite fields have been further studied about their periodic property; see \cite{CSW}. 

In this paper, we try to obtain analogues of Zsigmondy's theorem and
establish the strong divisibility property and the primitive divisor results for the Lucas and Lehmer sequences
in Dedekind domains.
Particularly, this includes extensions of the results in \cite{Car,PS,Sch,Ward} from number fields to function fields.

The approach in this paper is similar to that in \cite{Sha}.
However, the setting here is different (see Section~\ref{sec:setup}), and it indeed needs some extra considerations for Dedekind domains and also for the special case of function fields.

\subsection{Set-up}
\label{sec:setup}
Throughout the paper, let $R$ be a Dedekind domain unless specified otherwise. 
That is, $R$ is an integral domain in which every non-zero proper ideal factors into a product of prime ideals. 
We also assume that $R$ is not a field. 
Let $p$ be the characteristic of $R$. Notice that either $p=0$ or $p$ is a prime number.

An element $u \in R$ is called a \textit{unit} of $R$ if $1/u \in R$.
Two elements of $R$ are called \textit{coprime} if they are not both divisible by the same non-unit in $R$.
Two elements $a$ and $b$ in $R$ are called \textit{associated} if $a = u b$ for a unit $u$ of $R$,
which is equivalent to  $\langle a \rangle = \langle b \rangle$.
Here $\langle a \rangle$ means the ideal of $R$ generated by $a$.
In addition, two ideals $I$ and $J$ of $R$ are called \textit{coprime} if $I+J=R$.

As usual,  a \textit{common divisor} of two elements $a$ and $b$ of $R$ is an element of $R$ dividing both $a$ and $b$.
A \textit{greatest common divisor} of $a$ and $b$, denoted by $\gcd(a,b)$,
is defined as a common divisor of $a$ and $b$ which is divisible by any other common divisor.
We remark that a greatest common divisor may not exist and may be not unique.
Moreover, if $a$ and $b$ have more than one greatest common divisors, then all their greatest common divisors are associated.

By the way, for any two positive integers $m$ and $n$ we still use $\gcd(m,n)$ to denote their greatest common divisor,
which is a positive integer.

For any ideal $I$ and any element $b$ of $R$, we say that $I$ divides $b$ (or write $I \mid b$)
if $I$ divides the ideal $\langle b \rangle$ generated by $b$ in $R$, which is equivalent to $b \in I$.
Besides, a \textit{prime divisor} of $b$ means a prime ideal of $R$ dividing the ideal $\langle b \rangle$.

Given a sequence $(a_n)_{n \ge 1}$ of $R$, a prime ideal $\q$ of $R$ is called
a \textit{primitive prime divisor} of a term $a_n$ if $\q \mid a_n$ but $\q$ does not divide any earlier term in the sequence.
Also, the sequence is called a \textit{strong divisibility sequence}
if for any positive integers $m, n$ with $j = \gcd(m,n)$, we have $\gcd(a_m, a_n)=a_j$
(that is, $\gcd(a_m, a_n)$ exists and it can be equal to $a_j$).

We remark that the above concept of primitive prime divisor follows from the number field case; see \cite{PS, Sch}.

Throughout this paper we fix an algebraic closure of the fraction field of $R$. 
For simplicity, we also call it the algebraic closure of $R$. 
Then, the roots of a non-constant polynomial over $R$ in this algebraic closure are called \textit{algebraic elements} over $R$, and the roots of a non-constant monic polynomial over $R$ are called \textit{integral elements} over $R$. In particular, for any integer $n \ge 1$, the roots of the polynomial $x^n -1$ over $R$ are called roots of unity. 

Finally, we recall a typical example of Dedekind domains (See \cite{Cla, GP, GP2} for more examples). 
In the sequel, we fix a field $K$. 
Let $x$ be a variable which is transcendental over $K$ 
(we fix this variable throughout the paper).  
Let $K[x]$ be the polynomial ring in $x$ over $K$. 
We denote the fraction field of $K[x]$ by $K(x)$, which is called the \textit{rational function field} in $x$. 
A finite extension of $K(x)$ is called a \textit{function field} (unless otherwise specified, we always keep this meaning of function field throughout the paper).
It is well-known that the integral closure of $K[x]$ in a function field is a Dedekind domain.
For simplicity, we call it the \textit{integral closure} of the relevant function field.  

Let $R$ be the integral closure of a function field $E$. 
That is, $E$ is a finite extension of $K(x)$, and $R$ consists of all the integral elements of $E$ over $K[x]$. 
We defined the \textit{degree} of $R$, denoted by $\deg R$, to be the degree of the field extension $E/K(x)$, that is, 
$$
\deg R = [E : K(x)].
$$
Note that the fractional field of $R$ is exactly $E$. 
So, the degree of $R$ equals to the degree of the fractional field of $R$ over $K(x)$.  
Let $d=\deg R$. Recall that $p$ is the characteristic of $R$. We introduce the following condition 
on a positive integer $n$, which will be used frequently later on: 
\begin{equation}  \label{eq:boundn}
\left\{\begin{array}{ll}
 \varphi(n)(\varphi(n)-1)(\varphi(n)-2) > 3^{2d-2}  & \textrm{if $p=0$}, \\
\\
p \nmid n 
\textrm{ and }  \varphi(n)(\varphi(n)-1)(\varphi(n)-2) > 31\cdot 19^{2d-2} & \textrm{if $p>0$},
\end{array}\right.
\end{equation} 
where $\varphi$ is Euler's totient function. 
Using some explicit lower bound for $\varphi(n)$, one can get some explicit lower bound for $n$ to ensure \eqref{eq:boundn}. 
For example, using $\varphi(n) \ge \sqrt{n}$ when $n > 2$, we obtain that if  
\begin{equation}   \label{eq:boundn2}
\left\{\begin{array}{ll}
 n > \left(3^{(2d-2)/3} + 2 \right)^2  & \textrm{if $p=0$}, \\
\\
p \nmid n 
\textrm{ and }  n > \left( 31^{1/3}\cdot 19^{(2d-2)/3} + 2 \right)^2 & \textrm{if $p>0$},
\end{array}\right.
\end{equation} 
then \eqref{eq:boundn} holds.

\subsection{Main results}    \label{sec:main}

In the sequel, $R$ is a Dedekind domain unless otherwise specified. 
We first present an analogue of Zsigmondy's theorem in $R$.
Let $f, g$ be non-zero elements in $R$ 
 such that  the quotient $f/g$ is not a root of unity.
Assume that the principal ideals $\langle f \rangle$ and $\langle g \rangle$ in $R$ are coprime ideals which are not both equal to $R$.
Define the sequence $(F_n)_{n \ge 1}$ of $R$ with respect to $f, g$ by
$$
F_n = f^n - g^n, \quad n = 1,2, \ldots.
$$
Since $f/g$ is not a root of unity, $F_n \ne 0$ for any integer $n \ge 1$.

\begin{theorem}  \label{thm:strong1}
The sequence $(F_n)_{n \ge 1}$ is a strong divisibility sequence.
\end{theorem}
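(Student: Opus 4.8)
The plan is to deduce this from the already-established strong divisibility property of the Lucas sequence (Theorem~\ref{thm:strong2}) by a suitable specialization, exactly in the spirit of \cite[Section 5]{Sha}. First I would set $\alpha = f$ and $\beta = g$. Then $\alpha + \beta = f + g$ and $\alpha\beta = fg$ lie in $R$, and $\alpha/\beta = f/g$ is not a root of unity by hypothesis. The one point needing care is that the hypotheses of Theorem~\ref{thm:strong2} demand that $\langle \alpha + \beta\rangle$ and $\langle \alpha\beta\rangle$ be coprime and not both $R$, whereas our hypothesis is that $\langle f\rangle$ and $\langle g\rangle$ are coprime and not both $R$. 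These two conditions are not literally the same, so a direct appeal to Theorem~\ref{thm:strong2} with this choice of $\alpha,\beta$ is not immediate; this is the main obstacle, and I address it below.

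The cleaner route, which I would actually carry out, is to observe the algebraic identity
\begin{equation*}
F_n = f^n - g^n = (f - g)\cdot \frac{f^n - g^n}{f - g},
\end{equation*}
so that $F_n = (f-g) L_n$ where $(L_n)$ is the Lucas sequence attached to the pair $(\alpha,\beta) = (f,g)$, \emph{provided} that pair satisfies the running assumptions. To secure those assumptions, note that since $\langle f\rangle$ and $\langle g\rangle$ are coprime, any prime ideal $\q$ dividing both $\langle f+g\rangle$ and $\langle fg\rangle$ would have to divide exactly one of $\langle f\rangle, \langle g\rangle$ — say $\q \mid \langle f\rangle$ and $\q \nmid \langle g\rangle$ — but then $\q \mid \langle f+g\rangle$ forces $\q \mid \langle g\rangle$, a contradiction; hence $\langle f+g\rangle$ and $\langle fg\rangle$ are coprime. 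If both were equal to $R$ then in particular $\langle fg\rangle = R$, so both $f$ and $g$ are units, whence $\langle f\rangle = \langle g\rangle = R$, contradicting that they are not both $R$. Thus $(\alpha,\beta) = (f,g)$ is an admissible Lucas pair and Theorem~\ref{thm:strong2} applies to $(L_n)$.

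It then remains to transfer the strong divisibility from $(L_n)$ to $(F_n) = ((f-g)L_n)$. Fix $m,n$ with $d = \gcd(m,n)$. Since $(L_n)$ is a strong divisibility sequence, $\gcd(L_m, L_n)$ is associated to $L_d$; multiplying through by $f-g$ and using that for any $a,b,c \in R$ one has $\gcd(ca, cb)$ associated to $c\,\gcd(a,b)$ (a standard fact in a Dedekind domain, proved by comparing the prime factorizations of the ideals $\langle ca\rangle, \langle cb\rangle$), we get that $\gcd(F_m, F_n) = \gcd((f-g)L_m, (f-g)L_n)$ is associated to $(f-g)\gcd(L_m,L_n)$, which is associated to $(f-g)L_d = F_d$. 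I would also record the degenerate case $f = g$ separately: then $f/g = 1$ is a root of unity, which is excluded, so $f - g \neq 0$ and all the divisions above are legitimate. This completes the argument; the only genuinely nontrivial input is Theorem~\ref{thm:strong2}, and the new content here is the short ideal-theoretic check that the coprimality hypothesis on $\langle f\rangle, \langle g\rangle$ upgrades to the coprimality hypothesis on $\langle f+g\rangle, \langle fg\rangle$.
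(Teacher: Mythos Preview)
Your argument is correct and follows the same route as the paper, which simply states that Theorem~\ref{thm:strong1} ``follows directly from Theorem~\ref{thm:strong2}'' (with $\alpha = f$, $\beta = g$). Your write-up just makes explicit the two points the paper leaves to the reader: that the coprimality hypothesis on $\langle f\rangle,\langle g\rangle$ implies the Lucas hypotheses on $\langle f+g\rangle,\langle fg\rangle$, and that the strong divisibility of $(L_n)$ transfers to $(F_n)=((f-g)L_n)$ via the ideal identity $\langle f-g\rangle(\langle L_m\rangle+\langle L_n\rangle)=\langle F_m\rangle+\langle F_n\rangle$.
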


\begin{theorem}[Zsigmondy's theorem in $R$]  \label{thm:primitive1}
Assume that $R$ is the integral closure of a function field, and let $d$ be the degree of $R$. 
Then in the sequence $(F_n)_{n \ge 1}$, each term $F_n$ has a primitive prime divisor whenever $n$ satisfies \eqref{eq:boundn}.
\end{theorem}

\begin{remark}
In Theorem~\ref{thm:primitive1}, when the characteristic $p=0$, in view of \eqref{eq:boundn2} 
we know that each term $F_n$ has a primitive prime divisor  whenever
 $n > \left(3^{(2d-2)/3} + 2 \right)^2$.
\end{remark}

For $f, g$ as above, define the sequence:
$$
G_n = f^n + g^n, \quad n = 1,2, \ldots.
$$
If $p \ne 2$, then $(G_n)_{n \ge 1} \ne (F_n)_{n \ge 1}$.
Note that $F_{2n} = F_n G_n$, which implies that each primitive prime divisor of $F_{2n}$ comes from $G_n$.
Then, the following corollary is a direct consequence of Theorem~\ref{thm:primitive1}.

\begin{corollary}  \label{cor:primitive1}
Assume that $R$ is the integral closure of a function field, and let $d$ be the degree of $R$. 
Then in the sequence $(G_n)_{n \ge 1}$, each term $G_n$ has a primitive prime divisor whenever $n$ satisfies \eqref{eq:boundn}.
\end{corollary}

Now, we consider the Lucas sequences.
Let $\alpha, \beta$ be non-zero algebraic elements over $R$ 
 such that the quotient $\alpha/\beta$ is not a root of unity.
Assume that both $\alpha + \beta$ and $\alpha \beta$ are in $R$,
and the principal ideals $\langle \alpha + \beta \rangle$ and $\langle \alpha \beta \rangle$ in $R$ are coprime ideals
which are not both equal to $R$.
Define the \textit{Lucas sequence} $(L_n)_{n \ge 1}$ of $R$ with respect to $\alpha, \beta$ by
$$
L_n = \frac{\alpha^n - \beta^n}{\alpha - \beta}, \quad n =1,2, \ldots.
$$
We remark that the Lucas sequence $(L_n)_{n \ge 1}$ satisfies the following recurrence relation over $R$:
$$
L_{n+2} = (\alpha + \beta) L_{n+1} - \alpha\beta L_n, \quad n = 1, 2, \ldots.
$$

We also remark that classically the index $n$ of the Lucas sequences starts from $0$. 
Here for simplicity and for consistency we let it start from $1$, and 
we do the same thing for the Lehmer sequences below.

\begin{theorem}  \label{thm:strong2}
The sequence $(L_n)_{n \ge 1}$ is a strong divisibility sequence.
\end{theorem}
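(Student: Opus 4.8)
The plan is to realize $(L_n)_{n\ge1}$ as a mild twist of a Lehmer sequence and then invoke Theorem~\ref{thm:strong3}. Put $P=\alpha+\beta$ and $Q=\alpha\beta$, so that $P,Q\in R$, the ideals $\langle P\rangle$ and $\langle Q\rangle$ are coprime, and they are not both equal to $R$. Then the pair $(\lambda,\eta)=(\alpha,\beta)$ is admissible as a Lehmer pair: indeed $(\lambda+\eta)^2=P^2\in R$ and $\lambda\eta=Q\in R$; $\lambda/\eta=\alpha/\beta$ is not a root of unity; $\langle P^2\rangle$ and $\langle Q\rangle$ are coprime, since coprimality of ideals is preserved under taking powers; and they are not both $R$, because if $P^2$ and $Q$ were both units then $P$ would be a unit, against the hypothesis. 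Let $(U_n)_{n\ge1}$ be the Lehmer sequence of $R$ attached to this pair. By Theorem~\ref{thm:strong3} it is a strong divisibility sequence, and straight from the definitions one has $L_n=U_n$ when $n$ is odd and $L_n=P\,U_n$ when $n$ is even, using $\alpha^2-\beta^2=P(\alpha-\beta)$.

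The one genuinely arithmetic ingredient is the claim that $P$ is coprime to $L_n$ for every odd $n$, i.e.\ $\langle P\rangle+\langle L_n\rangle=R$. Since $\alpha$ and $\beta$ are the roots of $t^2-Pt+Q$, the sequence satisfies $L_0=0$, $L_1=1$ and $L_{n+1}=P\,L_n-Q\,L_{n-1}$ for $n\ge1$. Reducing this recursion modulo $\langle P\rangle$ gives $\overline{L}_{n+1}=-\overline{Q}\,\overline{L}_{n-1}$ in $R/\langle P\rangle$, whence $\overline{L}_{2k}=0$ and $\overline{L}_{2k+1}=(-\overline{Q})^{k}$ for all $k\ge0$. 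As $\langle P\rangle+\langle Q\rangle=R$, the residue $\overline{Q}$ is a unit in $R/\langle P\rangle$; hence for odd $n$ the residue $\overline{L}_n=(-\overline{Q})^{(n-1)/2}$ is a unit, which is exactly the assertion $\langle P\rangle+\langle L_n\rangle=R$. This argument is insensitive to the characteristic of $R$.

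Finally I would transfer strong divisibility one prime at a time, using that for a sequence $(a_n)_{n\ge1}$ in $R$ being a strong divisibility sequence is equivalent to the identity $v_\q(a_{\gcd(m,n)})=\min\{v_\q(a_m),v_\q(a_n)\}$ holding for every non-zero prime ideal $\q$ of $R$ and all $m,n\ge1$ (both are equivalent to $\langle a_{\gcd(m,n)}\rangle=\langle a_m\rangle+\langle a_n\rangle$); by Theorem~\ref{thm:strong3} this identity holds for $(U_n)$. Fix $\q$, $m$, $n$ and set $d=\gcd(m,n)$. If $m$ and $n$ are both odd, then $d$ is odd and $L_m=U_m$, $L_n=U_n$, $L_d=U_d$, so the identity is immediate. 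If exactly one of $m,n$ is odd, say $m$, then $d$ is odd, $L_m=U_m$, $L_n=PU_n$ and $L_d=U_d$; when $v_\q(P)=0$ this reduces to the previous case, and when $v_\q(P)>0$ the coprimality just proved forces $v_\q(L_m)=v_\q(U_m)=0$, so both sides equal $0$. If $m$ and $n$ are both even, then $d$ is even, $L_m=PU_m$, $L_n=PU_n$, $L_d=PU_d$, and the identity for $(L_n)$ follows from that for $(U_n)$ by adding $v_\q(P)$ to both sides. In all cases $v_\q(L_d)=\min\{v_\q(L_m),v_\q(L_n)\}$ for every $\q$, which is the claim. The main obstacle is the coprimality statement of the second paragraph; once it is secured, the remainder is bookkeeping with the relations $L_n=U_n$, $L_n=PU_n$ and Theorem~\ref{thm:strong3}.
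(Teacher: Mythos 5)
Your first two paragraphs are sound: the identification $(\lambda,\eta)=(\alpha,\beta)$, the identities $L_n=U_n$ for odd $n$ and $L_n=(\alpha+\beta)U_n$ for even $n$, and the recursion-mod-$\langle P\rangle$ computation showing $\langle \alpha+\beta\rangle+\langle L_n\rangle=R$ for odd $n$ are all correct; that last computation is a neat ideal-level substitute for Lemma~\ref{lem:abn}, which is exactly the ingredient the paper invokes at this point. The gap is in your third paragraph, in the sentence claiming that being a strong divisibility sequence is \emph{equivalent} to $v_\q(a_{\gcd(m,n)})=\min\{v_\q(a_m),v_\q(a_n)\}$ for every prime $\q$, followed by ``by Theorem~\ref{thm:strong3} this identity holds for $(U_n)$''. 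The paper's definition of strong divisibility is that an \emph{element} gcd of $a_m,a_n$ exists and is associated to $a_d$, and in a Dedekind domain that is not a PID this does not imply the prime-by-prime identity (equivalently, it does not imply $\langle a_d\rangle=\langle a_m\rangle+\langle a_n\rangle$). For instance, in $\Z[\sqrt{-5}]$ the elements $2$ and $1+\sqrt{-5}$ have $1$ as a greatest common divisor in the element sense, yet both lie in the non-principal prime $\q=(2,1+\sqrt{-5})$, so $\min\{v_\q(2),v_\q(1+\sqrt{-5})\}=1\neq 0=v_\q(1)$; transcendental Dedekind domains with non-trivial class group (e.g.\ coordinate rings of affine curves of positive genus) exhibit the same phenomenon, and the paper itself stresses, just before Lemma~\ref{lem:abn}, that element coprimality is weaker than ideal coprimality. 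Thus the direction of your ``equivalence'' that you actually use --- passing from the statement of Theorem~\ref{thm:strong3} to the valuation identity for $(U_n)$ --- is unjustified, and since every case of your final bookkeeping rests on that identity, the argument does not close. (The converse direction, from $\langle L_d\rangle=\langle L_m\rangle+\langle L_n\rangle$ to the gcd statement for $(L_n)$, is fine, as is your treatment of the factor $\alpha+\beta$.)

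To repair your route you would need the ideal-level strong divisibility $\langle U_d\rangle=\langle U_m\rangle+\langle U_n\rangle$, which is strictly stronger than what Theorem~\ref{thm:strong3} asserts; obtaining it would require upgrading the element-coprimality Lemmas~\ref{lem:PmPn2}--\ref{lem:Pmn} to coprimality of ideals, or arguing prime by prime inside the proof of Theorem~\ref{thm:strong3}. The paper instead stays at the level of element gcds throughout: it follows the argument of \cite{Sha} (Proof of Theorem 1.3), writing the Lucas terms via the Lehmer terms as you do and handling the extra factor $\alpha+\beta$ in the even-index terms with Lemma~\ref{lem:abn} (your mod-$\langle P\rangle$ computation would serve the same purpose there), without ever asserting the valuation identity for $(U_n)$.
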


\begin{theorem}  \label{thm:primitive2}
Assume that $R$ is the integral closure of a function field containing $\alpha$ and $\beta$, and let $d$ be the degree of $R$. 
Then in the sequence $(L_n)_{n \ge 1}$, each term $L_n$ has a primitive prime divisor whenever $n$ satisfies \eqref{eq:boundn}.
\end{theorem}

\begin{remark}
If $R$ does not contain $\alpha$ and $\beta$, then 
 since $\alpha$ and $\beta$ satisfy the equation $x^2 - (\alpha + \beta)x + \alpha \beta = 0$ over $R$, 
we can get an extension of $R$ whose degree is at most $2d$ and which contains $\alpha$ and $\beta$, 
and so the result in Theorem~\ref{thm:primitive2} still holds if we replace $d$ by $2d$ in \eqref{eq:boundn}. 
\end{remark}

Now, we consider the Lehmer sequences.
Let $\lambda, \eta$ be non-zero algebraic elements over $R$ 
 such that $\lambda / \eta$ is not a root of unity.
Assume that both $(\lambda + \eta)^2$ and $\lambda\eta$ are in $R$,
and  the principal ideals $\langle (\lambda + \eta)^2 \rangle$ and $\langle \lambda\eta \rangle$ of $R$ are  coprime ideals
 which are not both equal to $R$.
Define the \textit{Lehmer sequence} $(U_n)_{n \ge 1}$ of $R$ with respect to $\lambda, \eta$ by
\begin{equation*}
U_n =
\left\{\begin{array}{ll}
\dfrac{\lambda^n - \eta^n}{\lambda - \eta} & \textrm{if $n$ is odd,}\\
\\
\dfrac{\lambda^n - \eta^n}{\lambda^2 - \eta^2} & \textrm{if $n$ is even.}\end{array}\right.
\end{equation*}
We remark that the Lehmer sequence $(U_n)_{n \ge 1}$ satisfies the following recurrence relation over $R$:
$$
U_{n+4} = (\lambda^2 + \eta^2) U_{n+2} - (\lambda\eta)^2 U_n, \quad n = 1, 2, \ldots.
$$

\begin{theorem}  \label{thm:strong3}
The sequence $(U_n)_{n \ge 1}$ is a strong divisibility sequence.
\end{theorem}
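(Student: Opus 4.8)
The plan is to reduce the statement to a computation about the ideals generated by the $U_n$, exploiting the factorization $\lambda^n - \eta^n$ and the properties of the ideal $\gcd$. First I would record the basic arithmetic identities: for any $m,n$ one has, over the ring $R[\lambda,\eta]$ (or rather over $R$ after clearing denominators), the divisibility relations $U_d \mid U_n$ whenever $d \mid n$, and the relation $\lambda^{\gcd(m,n)} - \eta^{\gcd(m,n)}$ divides both $\lambda^m - \eta^m$ and $\lambda^n - \eta^n$, together with a Bézout-type identity expressing $\lambda^d - \eta^d$ as an $R[\lambda,\eta]$-combination of $\lambda^m-\eta^m$ and $\lambda^n-\eta^n$. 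The subtlety for the Lehmer sequence (as opposed to the Lucas sequence) is the parity-dependent normalization by $\lambda-\eta$ versus $\lambda^2-\eta^2$, so I would treat the cases according to the parities of $m$, $n$, and $d=\gcd(m,n)$ separately, as is done in \cite{Ward} and \cite{Sha}.

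Next I would show the two halves of the claim separately: (i) $U_d$ is a common divisor of $U_m$ and $U_n$, i.e. $\langle U_d\rangle \supseteq \langle U_m\rangle + \langle U_n\rangle$; and (ii) any common divisor of $U_m$ and $U_n$ divides $U_d$, i.e. $\langle U_m\rangle + \langle U_n\rangle \subseteq \langle U_d\rangle$. Part (i) follows from $d \mid m$ and $d\mid n$ together with the divisibility $U_d \mid U_m$ for $d\mid m$; here one must check that the ratio $U_m/U_d$ still lies in $R$ and not merely in the fraction field, which is where the coprimality hypothesis on $\langle(\lambda+\eta)^2\rangle$ and $\langle\lambda\eta\rangle$ enters — it guarantees the relevant denominators ($\lambda-\eta$ or $\lambda^2-\eta^2$) do not introduce spurious prime factors. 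Part (ii) is the heart of the matter: from the Bézout identity for $\lambda^d-\eta^d$ in terms of $\lambda^m-\eta^m$ and $\lambda^n-\eta^n$ I would pass to an identity relating $U_d$ to $U_m$, $U_n$ and symmetric functions $(\lambda+\eta)^2$, $\lambda\eta$ of $\lambda,\eta$ (so that all coefficients lie in $R$), showing that any prime ideal $\q$ dividing both $\langle U_m\rangle$ and $\langle U_n\rangle$ divides $\langle U_d\rangle$, and then upgrading this prime-by-prime statement to an ideal inclusion using that $R$ is a Dedekind domain.

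The main obstacle I anticipate is the passage from the polynomial/ring identities in $\lambda$ and $\eta$ (which live in the extension ring $R[\lambda,\eta]$, not in $R$) down to genuine divisibility statements among ideals of $R$: one must argue that although $\lambda,\eta$ are merely algebraic over $R$, the quantities $U_n$, $(\lambda+\eta)^2$, $\lambda\eta$ lie in $R$ and the identities connecting them can be arranged to have $R$-coefficients, so that localizing at each prime $\q$ of $R$ is legitimate. A secondary technical point is handling the even/odd bookkeeping in the definition of $U_n$ so that the factor $\lambda+\eta$ is accounted for correctly — in particular when exactly one of $m,n$ is even, or when $d$ has a different parity from $m$ or $n$ — and verifying that the extra factor $(\lambda+\eta)^2 \in R$ does not spoil coprimality thanks to the standing hypothesis. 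Once these bookkeeping and descent issues are dispatched, the strong divisibility property follows by combining (i) and (ii) and invoking the definition of $\gcd$ of ideals in a Dedekind domain, exactly as in the polynomial-ring case treated in \cite{Sha}.
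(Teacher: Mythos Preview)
Your plan is broadly sound and correctly anticipates the parity case split and the descent obstacle, but it differs in organization from the paper's proof and leaves the hardest step unresolved.

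The paper does not work with a B\'ezout identity for $\lambda^d-\eta^d$ directly. Instead it writes explicitly, in each parity case,
\[
U_m = U_d \cdot P_{m/d}(\lambda^d,\eta^d), \qquad U_n = U_d \cdot P_{n/d}(\lambda^d,\eta^d)
\]
(with the extra factor $(\lambda^d+\eta^d)/(\lambda+\eta)$ split off in the mixed-parity case), and then invokes Lemmas~\ref{lem:PmPn2}--\ref{lem:Pmn} to conclude that these cofactors are coprime in $R$. This factor-then-show-cofactors-coprime device dispatches your parts (i) and (ii) in one stroke and never requires producing B\'ezout coefficients in $R$ for the $U$'s themselves. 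The coprimality lemmas rest on the resultant computation $\Res(P_m,P_n)=\pm 1$ for coprime $m,n$ (Lemma~\ref{lem:Res2}) combined with Lemma~\ref{lem:coprime}.

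The genuine gap in your proposal is the descent step you flagged but did not resolve. A B\'ezout relation expressing $\lambda^d-\eta^d$ in terms of $\lambda^m-\eta^m$ and $\lambda^n-\eta^n$ has coefficients in $\Z[\lambda,\eta]$, not in $R$, and after homogenizing the identity from $\Z[X]$ one typically picks up an extra factor such as $\eta^N$ on the right-hand side. Your assertion that the coefficients can be ``arranged to have $R$-coefficients'' because the relevant quantities are symmetric in $\lambda,\eta$ is not justified: the B\'ezout coefficients themselves need not be symmetric. The paper's mechanism for getting back to $R$ is the integral-closure argument in Lemma~\ref{lem:coprime}: if a non-unit $\pi\in R$ divided both cofactors, then dividing the resultant identities by $\pi$ would give elements of the fraction field of $R$ that are integral over $R$, hence in $R$, and one then contradicts the coprimality of $\langle(\lambda+\eta)^2\rangle$ and $\langle\lambda\eta\rangle$. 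Your route would need exactly this device, and once you supply it you have essentially rebuilt Lemmas~\ref{lem:coprime}--\ref{lem:Pmn}; at that point the factorization approach is both shorter and cleaner than tracking a global B\'ezout identity through the parity cases.
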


\begin{theorem}  \label{thm:primitive3}
Assume that $R$ is the integral closure of a function field containing $\lambda$ and $\eta$, and let $d$ be the degree of $R$. 
Then in the sequence $(U_n)_{n \ge 1}$, each term $U_n$ has a primitive prime divisor whenever $n$ satisfies \eqref{eq:boundn}.
\end{theorem}  

\begin{remark}
If $R$ does not contain $\lambda$ and $\eta$, then 
 since $\lambda$ and $\eta$ satisfy the equation $x^4 - (\lambda^2 + \eta^2)x^2 + \lambda^2 \eta^2 = 0$ over $R$, 
we can get an extension of $R$ whose degree is at most $4d$ (noticing also $\lambda\eta \in R$) and which contains $\lambda$ and $\eta$, 
and so the result in Theorem~\ref{thm:primitive3} still holds if we replace $d$ by $4d$ in \eqref{eq:boundn}. 
\end{remark}

Finally, we explain the reason in the condition \eqref{eq:boundn} why we impose $p \nmid n$ when 
the characteristic $p >0$. We take the sequence $(U_n)_{n \ge 1}$ for example. 
First, for any even integer $m \ge 2$, we have
\begin{align*}
U_{mp} = \frac{\lambda^{mp} - \eta^{mp}}{\lambda^2 - \eta^2} & = \frac{(\lambda^{m} - \eta^{m})^p}{\lambda^2 - \eta^2} =
\Big(\frac{\lambda^{m} - \eta^{m}}{\lambda^2 - \eta^2}\Big)^p \cdot \frac{(\lambda^2 - \eta^2)^{p}}{\lambda^2 - \eta^2} \\
& = U_{m}^{p} \cdot  \frac{\lambda^{2p} - \eta^{2p}}{\lambda^2 - \eta^2} = U_{m}^{p} U_{2p}.
\end{align*}
Similarly, for any odd integer $m \ge 1$, we have
\begin{equation*}
U_{mp} = \left\{\begin{array}{ll}
 U_{m}^{p} U_{p} &\quad\text{if $p$ is odd},\\
 U_{m}^{p} &\quad\text{if $p = 2$}.
\end{array}\right.
\end{equation*}
Hence, for any integer $m \ge 2$, the term $U_{mp}$ does not have any primitive prime divisor in the sequence $(U_n)_{n \ge 1}$
when $p>0$. 

For the proofs, we first prove Theorems~\ref{thm:strong3} and \ref{thm:primitive3} in Section~\ref{sec:Lehmer},
and then prove Theorems~\ref{thm:strong2} and \ref{thm:primitive2} in Section~\ref{sec:Lucas},
and finally prove Theorems~\ref{thm:strong1} and \ref{thm:primitive1} in Section~\ref{sec:Zig}.
We collect some basic results in Section~\ref{sec:pre}.

\section{Preliminaries}
\label{sec:pre}

In this section, we collect some basic results for the proofs of the main results.
Recall that $R$ is a Dedekind domain.

We first recall some basic properties of the resultant of two homogeneous polynomials  in the following lemma; see \cite[Proposition 2.13]{Silverman}.

\begin{lemma}   \label{lem:Res}
Let
\begin{align*}
& A(x,y) = a_0 x^m + a_1 x^{m-1}y + \cdots + a_m y^m = a_0 \prod_{i=1}^{m} (x - \alpha_i y), \\
& B(x,y) = b_0 x^n + b_1 x^{n-1}y + \cdots + b_n y^n = b_0 \prod_{j=1}^{n} (x - \beta_j y)
\end{align*}
be two non-constant homogeneous polynomials in $x$ and $y$ of degrees $m$ and $n$ defined over a field.
Then, their resultant $\Res(A,B)$ equals to
$$
\Res(A,B) = a_0^n b_0^m \prod_{i=1}^{m} \prod_{j=1}^{n} (\alpha_i - \beta_j) \in \Z[a_0, \ldots, a_m, b_0, \ldots, b_n].
$$
Moreover, there exist four homogeneous polynomials $C_1, D_1, C_2, D_2$  in $x$ and $y$ and with coefficients in $\Z[a_0, \ldots, a_m, b_0, \ldots, b_n]$ such that
\begin{align*}
& C_1A + D_1 B = \Res(A,B)x^{m+n-1}, \\
& C_2A + D_2 B = \Res(A,B)y^{m+n-1}.
\end{align*}
\end{lemma}

For any integer $n \ge 1$, the $n$-th homogeneous cyclotomic polynomial is defined by
$$
\Phi_n (x,y) = \prod_{k=1, \, \gcd(k,n)=1}^{n} (x - \omega_n^k y) \in \Z[x,y],
$$
where $\omega_n$ is a primitive $n$-th root of unity in the complex numbers $\C$, and we also define the polynomial
$$
P_n(x,y) = \frac{x^n - y^n}{x-y} = \sum_{k=0}^{n-1} x^{n-1-k}y^k = \prod_{k=1}^{n-1} (x - \omega_n^k y)  \in \Z[x,y].
$$
Clearly,
$$
x^n - y^n = \prod_{j \mid n} \Phi_j (x,y), \qquad P_n (x,y) = \prod_{j \mid n, \, j \ge 2} \Phi_j (x,y).
$$
It is well-known that the degree of $\Phi_n(x,y)$ equals to $\varphi(n)$, where $\varphi$ is Euler's totient function.

The following lemma is \cite[Lemma 2.4]{FW} about the resultant of $P_m(x,y)$ and $P_n(x,y)$.

\begin{lemma}  \label{lem:Res2}
For any positive coprime  integers $m$ and $n$, we have $\Res(P_m, P_n) = \pm 1$.
\end{lemma}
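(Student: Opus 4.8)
The plan is to reduce the homogeneous resultant to an ordinary one-variable resultant and then rule out every rational prime divisor by a reduction argument. Since the coefficient of $X^{m-1}$ in $P_m(X,Y)$ equals $1$, dehomogenizing at $Y=1$ preserves the $X$-degree, so $\Res(P_m,P_n)=\Res\big(P_m(X,1),P_n(X,1)\big)$, where $P_k(X,1)=1+X+\cdots+X^{k-1}\in\Z[X]$ is monic of degree $k-1$. By Lemma~\ref{lem:Res} this resultant is an integer, and explicitly $\Res(P_m,P_n)=\prod_{i=1}^{m-1}\prod_{j=1}^{n-1}(\zeta_m^i-\zeta_n^j)$. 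It therefore suffices to show that this integer is a unit of $\Z$, i.e.\ equal to $\pm 1$.

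First I would check nonvanishing: if some factor $\zeta_m^i-\zeta_n^j$ were zero, the common value $\zeta_m^i=\zeta_n^j$ would be a root of unity whose order divides both $m$ and $n$, hence divides $\gcd(m,n)=1$; thus $\zeta_m^i=1$, forcing $m\mid i$, which is impossible for $1\le i\le m-1$. Next, suppose a rational prime $\ell$ divides $\Res(P_m,P_n)$. Because $P_m(X,1)$ and $P_n(X,1)$ are monic, their leading coefficients survive reduction modulo $\ell$, so reduction commutes with the resultant and $\Res\big(\overline{P_m(X,1)},\overline{P_n(X,1)}\big)=0$ in $\F_\ell$; hence the two reductions share a root $\zeta$ in $\overline{\F_\ell}$. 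Evaluating $(X-1)P_k(X,1)=X^k-1$ at $\zeta$ gives $\zeta^m=\zeta^n=1$, so $\zeta^{\gcd(m,n)}=\zeta=1$. But then $0=\overline{P_m(1,1)}=m\bmod\ell$ and $0=\overline{P_n(1,1)}=n\bmod\ell$, so $\ell\mid m$ and $\ell\mid n$, contradicting $\gcd(m,n)=1$. Therefore no prime divides $\Res(P_m,P_n)$, which together with nonvanishing yields $\Res(P_m,P_n)=\pm1$.

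This argument has essentially no hard step; the points needing care are (i) the identity $\Res(P_m,P_n)=\Res\big(P_m(X,1),P_n(X,1)\big)$, which rests on the nonvanishing of the top $X$-coefficient of each form — alternatively one may simply specialize the B\'ezout identities of Lemma~\ref{lem:Res} at $Y=1$; (ii) the commutation of resultant with reduction modulo $\ell$, again a consequence of monicity; and (iii) the separate handling of the root $\zeta=1$, which is exactly where coprimality of $m$ and $n$ is used a second time, via $P_k(1,1)=k$. If one prefers to avoid reduction modulo $\ell$, an alternative is to factor $P_k(X,1)=\prod_{d\mid k,\,d\ge 2}\Phi_d(X,1)$, invoke multiplicativity of the resultant, and note that $\Res(\Phi_d,\Phi_e)=\pm1$ whenever $d,e\ge 2$ with $\gcd(d,e)=1$; but this requires the standard formula for resultants of cyclotomic polynomials, so the reduction argument above is more self-contained.
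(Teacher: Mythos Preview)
Your argument is correct. The paper itself does not supply a proof of this lemma; it simply records the statement and cites \cite[Lemma~2.4]{FW}. So there is nothing to compare at the level of strategy: you have given a complete, self-contained proof where the paper defers to the literature.

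A couple of remarks on presentation. Your reduction-mod-$\ell$ step silently uses that the common root $\zeta$ is nonzero, which is needed to conclude $\zeta^{\gcd(m,n)}=\zeta=1$ from $\zeta^m=\zeta^n=1$; this is immediate since $P_k(0,1)=1$, but worth a word. Also, Lemma~\ref{lem:Res} as stated in the paper applies to non-constant forms, so the boundary case $m=1$ (where $P_1=1$) is not literally covered by that citation; of course the resultant of the constant $1$ with anything is $1$, so this is a formality. Otherwise the proof is clean and arguably more informative than a bare citation.
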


The next lemma is a strong analogue of \cite[Lemma 2.3]{Sha}.
It is about coprime ideals (instead of coprime elements). 
Note that  coprime principal ideals imply coprime elements, but the converse is not true in general.

\begin{lemma}  \label{lem:coprime}
Let $a, b$ be two algebraic elements over $R$.
Assume that both $(a+b)^2$ and $ab$ are in $R$, and the two principal ideals $\langle (a+b)^2 \rangle$ and $\langle ab \rangle$ of $R$ are coprime.
Let $A(x,y), B(x,y) \in \Z[x,y]$ be non-constant homogeneous polynomials with resultant $\Res(A,B) = \pm 1$.
Assume that both $A(a,b)$ and $B(a,b)$ are in $R$.
Then, the principal ideals $\langle A(a,b) \rangle$ and $\langle B(a,b) \rangle$ of $R$ are coprime.
\end{lemma}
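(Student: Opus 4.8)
The plan is to reduce the claim to a statement about prime ideals. It suffices to prove that no prime ideal of $R$ divides both $A(a,b)$ and $B(a,b)$: indeed, a non-unit $c$ of $R$ dividing both would generate a proper ideal $\langle c\rangle$, hence one lying in a prime ideal $\mathfrak p$, and $\mathfrak p$ would then divide both $A(a,b)$ and $B(a,b)$ (when $c=0$ this forces $A(a,b)=B(a,b)=0$, and any nonzero prime of $R$ — one exists since $R$ is not a field — does the job).

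First I would pass from $R$ to a ring containing $a$ and $b$ themselves. Put $s=a+b$ and $t=ab$, so $s^2,t\in R$ by hypothesis. From $a^2-sa+t=0$ we obtain $(a^2+t)^2=s^2a^2$, hence
\[
a^4+(2t-s^2)a^2+t^2=0,
\]
a monic polynomial over $R$; likewise for $b$. Thus $a$ and $b$ are integral over $R$ and lie in the integral closure $S$ of $R$ in the finite extension $K(a,b)$ of $K=\mathrm{Frac}(R)$; since $R\subseteq S$ is an integral extension, the lying-over theorem is at our disposal.

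Now the core step. Suppose a prime ideal $\q$ of $R$ divided both $A(a,b)$ and $B(a,b)$, and write $N=m+n-1$ with $m=\deg A$, $n=\deg B$. Substituting the (integer) coefficients of $A$ and $B$ into Lemma~\ref{lem:Res} and using $\Res(A,B)=\pm1$ yields homogeneous $G_1,H_1,G_2,H_2\in\Z[X,Y]$ with
\[
G_1(a,b)A(a,b)+H_1(a,b)B(a,b)=\pm a^{N},\qquad G_2(a,b)A(a,b)+H_2(a,b)B(a,b)=\pm b^{N}.
\]
The coefficients of $G_i,H_i$ are integers, hence lie in $R\subseteq S$, so $G_i(a,b),H_i(a,b)\in S$. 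Pick a prime $\mathfrak{Q}$ of $S$ with $\mathfrak{Q}\cap R=\q$. Since $A(a,b),B(a,b)\in\q\subseteq\mathfrak{Q}$, the two displayed identities force $a^{N},b^{N}\in\mathfrak{Q}$, whence $a,b\in\mathfrak{Q}$ because $\mathfrak{Q}$ is prime. Then $(a+b)^2$ and $ab$ both lie in $\mathfrak{Q}\cap R=\q$, i.e.\ $\q$ divides both $\langle(a+b)^2\rangle$ and $\langle ab\rangle$, contradicting the coprimality of these ideals. So no such $\q$ exists, and the lemma follows.

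I expect the first step to be the only real difficulty: because $a$ and $b$ need not belong to $R$ — only $(a+b)^2$, $ab$, $A(a,b)$ and $B(a,b)$ are assumed to — one cannot argue directly with divisibility of $a$ and $b$ in $R$, and it is precisely the passage to the integral closure $S$, the point where the transcendental Dedekind setting asks for slightly more than the polynomial-ring argument of \cite{Sha}, that makes the resultant identities of Lemma~\ref{lem:Res} usable. After that, keeping track of the contraction $\mathfrak{Q}\cap R=\q$ is routine and the rest is formal.
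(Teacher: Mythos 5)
Your proof is correct. It rests on the same two pillars as the paper's argument --- the resultant identities of Lemma~\ref{lem:Res} specialised at $(a,b)$ with $\Res(A,B)=\pm 1$, and the coprimality of $\langle (a+b)^2\rangle$ and $\langle ab\rangle$ as the source of the contradiction --- but you handle the crux (that $a,b$ need not lie in $R$) by a different device. You pass to the integral closure $S$ of $R$ in $K(a,b)$ (in fact $R[a,b]$ would already suffice, being integral over $R$), lift the hypothetical common prime $\q$ to a prime $\mathfrak{Q}$ of $S$ by lying over, read off $a,b\in\mathfrak{Q}$ directly from the identities $G_iA+H_iB=\pm a^{N},\pm b^{N}$, and contract back to get $(a+b)^2, ab\in\q$. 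The paper instead never leaves $R$: it combines the two resultant identities into the symmetric quantities $a^{2N}+b^{2N}$ and $(ab)^{2N}$, which lie in $R$, uses the integral closedness of $R$ (rather than lying over) to conclude that a common non-unit divisor $\pi$ divides these elements \emph{in} $R$, and then needs an extra binomial identity $(a^2+b^2)^k = a^{2k}+b^{2k}+a^2b^2\cdot(\text{element of }R)$ to pass from $a^{2N}+b^{2N}$ and $ab$ to $(a+b)^2$. Your route is shorter and avoids the symmetrisation and binomial manipulation at the modest cost of invoking the lying-over theorem for the integral extension $R\subseteq S$; the paper's route is more elementary in that it only uses that $R$ is integrally closed in its fraction field. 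Your reduction to prime ideals at the start (including the degenerate case $A(a,b)=B(a,b)=0$, where a nonzero prime exists because $R$ is not a field) is also handled correctly and is slightly more careful than the paper's corresponding step.
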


\begin{proof}
Let $m = \deg A$ and $n = \deg B$.
Note that both $(a+b)^2$ and $ab$ are in $R$.
So, by induction we obtain that for any  integer $k \ge 1$, $a^{2k} + b^{2k}$ is also in $R$.
By Lemma~\ref{lem:Res} and noticing $\Res(A,B) = \pm 1$, we have that there exist $u_1, w_1, u_2, w_2 \in \Z[a, b]$ such that
$$
u_1 A(a,b) + w_1B(a,b) = a^{2(m+n-1)} + b^{2(m+n-1)} \in R
$$
and
$$
u_2 A(a,b) + w_2 B(a,b) = a^{2(m+n-1)}b^{2(m+n-1)} \in R.
$$
Notice that $u_1, w_1, u_2, w_2$ might be not in $R$. 

By contradiction, suppose that the principal ideals $\langle A(a,b) \rangle$ and $\langle B(a,b) \rangle$ of $R$ are not coprime. Then, noticing also $R$ is a Dedekind domain, we know that there is a prime ideal, say $\q$, of $R$ such that both $A(a,b)$ and $B(a,b)$ are in $\q$. 

Now, let $M$ be the fractional field of $R$, let $M(a, b)$ be the field generated by $a$ and $b$ over $M$, 
and let $R^{\prime}$ be the integral closure of $R$ in $M(a, b)$. 
Since $a, b$ are two algebraic elements over $R$, we have that $M(a, b)$ is a finite extension of $M$. 
So, $R^{\prime}$ is also a Dedekind domain. 
By the Cohen–Seidenberg theorem (see \cite[Theorem 2]{CS}),  there is a prime ideal, say $\q^{\prime}$, of $R^{\prime}$ such that $\q^{\prime} \cap R = \q$. 
Clearly, both $A(a,b)$ and $B(a,b)$ are in $\q^{\prime}$. 

By assumption, we note that $a,b$ are integral over $R$ satisfying the equation $x^4 - (a^2 + b^2)x^2 + a^2b^2 = 0$. 
So, both $a$ and $b$ are in $R^{\prime}$. 
Then, the above $u_1, w_1, u_2, w_2$ are all in $R^{\prime}$. 
Hence, we get that $a^{2(m+n-1)} + b^{2(m+n-1)} \in \q^{\prime}$ and $a^{2(m+n-1)}b^{2(m+n-1)} \in \q^{\prime}$. 
Moreover, since both  $a^{2(m+n-1)} + b^{2(m+n-1)}$ and $a^{2(m+n-1)}b^{2(m+n-1)}$ (which equals to $(ab)^{2(m+n-1)}$) are in $R$, 
we know that they are both in $\q$ (because $\q^{\prime} \cap R =\q$). So, we have $ab \in \q$. 

Let $k = m+n-1$. We have shown that $\q \mid a^{2k} + b^{2k}$ and $\q \mid ab$ in $R$.
Note that
\begin{align*}
(a^2 + b^2)^k & = a^{2k} + b^{2k} + \sum_{i=1}^{k-1}\binom{k}{i}(a^2)^i (b^2)^{k-i}  \\
& =  a^{2k} + b^{2k} + a^2b^2 \sum_{i=1}^{k-1}\binom{k}{i}(a^2)^{i-1} (b^2)^{k-1-i},
\end{align*}
where $\sum_{i=1}^{k-1}\binom{k}{i}(a^2)^{i-1} (b^2)^{k-1-i}$ is in $R$
(because it is symmetric in $a^2$ and $b^2$, and both $a^2 + b^2$ and $a^2b^2$ are in $R$).
Hence, we obtain $\q \mid (a^2 + b^2)^{k}$, then $\q \mid a^2 + b^2$,  and thus $\q \mid (a + b)^2$ in $R$ (because $\q \mid ab$).
This leads to a contradiction with the assumption that the principal ideals $\langle (a+b)^2 \rangle$ and $\langle ab \rangle$ of $R$ are coprime.
Therefore,  the principal ideals $\langle A(a,b) \rangle$ and $\langle B(a,b) \rangle$ of $R$ are coprime.
\end{proof}

Using Lemma~\ref{lem:coprime}, we obtain the following five lemmas about coprime ideals in $R$ (instead of coprime elements in $R$), 
which are strong analogues of \cite[Lemmas 2.4--2.8]{Sha}. 
For the convenience of the readers, we also present their proofs.

\begin{lemma}  \label{lem:PmPn2}
Let $a, b$ be two algebraic elements over $R$.
Assume that both $a+b$ and $ab$ are in $R$, and the two principal ideals $\langle a+b \rangle$ and $\langle ab \rangle$ of $R$ are coprime.
Then, for any positive coprime integers $m, n$, the principal ideals $\langle P_m(a,b) \rangle$ and $\langle P_n(a,b) \rangle$ of $R$ are coprime.
\end{lemma}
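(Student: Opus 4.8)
The plan is to deduce this directly from Lemma~\ref{lem:coprime} applied to $A = P_m$ and $B = P_n$, after clearing away two easy points. First I would dispose of the degenerate cases $m = 1$ or $n = 1$: here $P_1(X,Y) = 1$, so $P_1(a,b) = 1$ is a unit of $R$ and is trivially coprime to every element of $R$. Hence from now on I may assume $m, n \ge 2$, so that $P_m(X,Y)$ and $P_n(X,Y)$ are non-constant homogeneous polynomials in $\Z[X,Y]$.

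Next I would check the remaining hypotheses of Lemma~\ref{lem:coprime}. The polynomial $P_k(X,Y) = (X^k - Y^k)/(X-Y)$ is symmetric in $X$ and $Y$, so $P_m(a,b)$ and $P_n(a,b)$ are symmetric polynomial expressions in $a$ and $b$ with integer coefficients; being symmetric, each can be written as a polynomial in $a+b$ and $ab$, both of which lie in $R$ by hypothesis, so $P_m(a,b), P_n(a,b) \in R$. Moreover, since $a+b \in R$ we have $(a+b)^2 \in R$, and the coprimality of $\langle a+b \rangle$ and $\langle ab \rangle$ forces the coprimality of $\langle (a+b)^2 \rangle$ and $\langle ab \rangle$: any prime ideal $\q$ of $R$ dividing both $(a+b)^2$ and $ab$ would divide $a+b$ and $ab$, contradicting the assumption. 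Finally, by Lemma~\ref{lem:Res2} we have $\Res(P_m, P_n) = \pm 1$, because $m$ and $n$ are coprime.

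With all the hypotheses of Lemma~\ref{lem:coprime} in place --- namely $A = P_m$ and $B = P_n$ non-constant homogeneous in $\Z[X,Y]$ with $\Res(A,B) = \pm 1$, both $A(a,b), B(a,b) \in R$, and $\langle (a+b)^2 \rangle$ coprime to $\langle ab \rangle$ --- that lemma yields immediately that $P_m(a,b)$ and $P_n(a,b)$ are coprime in $R$, which is the desired conclusion. This mirrors the argument for \cite[Lemma 2.4]{Sha}.

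I do not expect a real obstacle here: the only steps requiring care are the bookkeeping points above --- excluding the constant polynomial $P_1$, verifying that the symmetric expressions $P_m(a,b), P_n(a,b)$ land in $R$ rather than merely in the fraction field of an extension, and translating the coprimality of $\langle a+b \rangle, \langle ab \rangle$ into that of $\langle (a+b)^2 \rangle, \langle ab \rangle$ --- after which the statement is an essentially formal consequence of Lemmas~\ref{lem:Res2} and~\ref{lem:coprime}.
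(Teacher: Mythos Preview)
Your proposal is correct and follows essentially the same route the paper intends: reduce to Lemma~\ref{lem:coprime} with $A=P_m$, $B=P_n$ via Lemma~\ref{lem:Res2}, exactly as in \cite[Lemma~2.4]{Sha}. The bookkeeping you flag (disposing of $P_1=1$, symmetry giving $P_k(a,b)\in R$, and passing from coprimality of $\langle a+b\rangle,\langle ab\rangle$ to that of $\langle (a+b)^2\rangle,\langle ab\rangle$) is precisely what is needed, and nothing is missing.
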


\begin{proof}
Without loss of generality, we can assume $m \ge 2, n \ge 2$.
Clearly, both $P_m(a,b)$ and $P_n(a,b)$ are in $R$.
Because both $a+b$ and $ab$ are in $R$, and both $P_m(a,b)$ and $P_n(a,b)$ are symmetric in $a$ and $b$.

By assumption, the integers $m$ and $n$ are coprime, and the principal ideals $\langle (a+b)^2 \rangle$ and $\langle ab \rangle$ of $R$ are coprime.
Then, the desired result follows directly from Lemmas~\ref{lem:Res2} and \ref{lem:coprime}.
\end{proof}

\begin{lemma}  \label{lem:PmPn-odd}
Let $a, b$ be defined as in Lemma~\ref{lem:coprime}.
Let $m,n$ be two positive coprime  integers such that both $m$ and $n$ are odd.
Then, the principal ideals $\langle P_m(a,b) \rangle$ and $\langle P_n(a,b) \rangle$ of $R$ are coprime.
\end{lemma}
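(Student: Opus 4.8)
The plan is to reduce this to Lemma~\ref{lem:coprime} by exhibiting homogeneous integer polynomials $A, B \in \Z[X,Y]$ with $\Res(A,B) = \pm 1$ such that $A(a,b) = P_m(a,b)$ and $B(a,b) = P_n(a,b)$, or at least such that a common prime divisor of $P_m(a,b)$ and $P_n(a,b)$ would force a common prime divisor of $A(a,b)$ and $B(a,b)$. The subtlety, and the reason this needs its own lemma rather than being a direct application of Lemma~\ref{lem:PmPn2}, is that under the hypotheses inherited from Lemma~\ref{lem:coprime} we only know $(a+b)^2$ and $ab$ lie in $R$, not $a+b$ itself; consequently we cannot freely use $P_m(a,b)$ and $P_n(a,b)$ as elements of $R$ via arbitrary integer-polynomial identities, but we can use $P_m(a,b)^2$, $P_m(a,b)P_n(a,b)$, and so on, together with $a^{2k}+b^{2k}$, $(ab)^k$. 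The oddness of $m$ and $n$ is exactly what makes these squares and products expressible as honest elements of $R$.

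First I would observe that since $m$ is odd, $P_m(X,Y) = (X^m - Y^m)/(X-Y)$ is itself a polynomial in which the "odd part" behaves well: more precisely, $P_m(X,Y)P_m(X,-Y)$ and $P_m(X,Y)^2$ can be rewritten in terms of $X^2, Y^2$ (and $XY$) appropriately, so that $P_m(a,b)^2$, $P_m(a,b)P_n(a,b)$, and the mixed product $P_m(a,b)P_m(a,-b)$-type expressions all lie in $R$. Concretely, because $m$ is odd we have $P_m(a,b)P_m(-a,-b) = -P_m(a,b)^2$ up to sign bookkeeping, and the relevant symmetric combinations in $a^2, b^2, ab$ land in $R$. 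The second step is to invoke Lemma~\ref{lem:Res2}: since $m$ and $n$ are coprime, $\Res(P_m, P_n) = \pm 1$, hence by Lemma~\ref{lem:Res} there are homogeneous $G_1, H_1, G_2, H_2 \in \Z[X,Y]$ with $G_1 P_m + H_1 P_n = \pm X^{N}$ and $G_2 P_m + H_2 P_n = \pm Y^{N}$ for $N = \deg P_m + \deg P_n - 1$.

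The core argument then runs by contradiction exactly as in the proof of Lemma~\ref{lem:coprime}: suppose a non-unit $\pi \in R$ divides both $P_m(a,b)$ and $P_n(a,b)$. I would multiply through by $P_m(a,b)$ (or $P_n(a,b)$, whichever parity makes the bookkeeping land in $R$) and by a suitable power of $ab$ so that the resulting Bézout-type identities have all coefficients in $R$ and left-hand sides divisible by $\pi$; integral closedness of $R$ then upgrades "divisible in the fraction field" to "divisible in $R$", giving $\pi \mid (ab)^j(a^{2N}+\cdots)$ and $\pi \mid (ab)^{j'}$-type terms. From this one extracts a prime ideal $\q \supseteq \langle \pi \rangle$ dividing both a power-of-$(ab)$ term and a "power of $a^2$ plus power of $b^2$" term; the binomial-expansion trick from Lemma~\ref{lem:coprime} then forces $\q \mid (a+b)^2$ and $\q \mid ab$, contradicting coprimality of $\langle (a+b)^2\rangle$ and $\langle ab\rangle$.

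The main obstacle I anticipate is purely the parity bookkeeping in the first step: making sure that every quantity I claim lies in $R$ genuinely does, given only $(a+b)^2, ab \in R$ rather than $a+b \in R$. The honest way to handle this is to work throughout with $P_m(a,b)P_n(a,b)$ and $P_m(a,b)^2$ as the elements of $R$ under scrutiny — noting that $\pi \mid P_m(a,b)$ and $\pi \mid P_n(a,b)$ implies $\pi^2 \mid P_m(a,b)P_n(a,b) \in R$ — and to verify, using that $m+n$ is even (since both are odd) together with the resultant identities, that the relevant combinations of $X^N, Y^N, (XY)$ evaluated at $(a,b)$ are symmetric functions of $a^2, b^2$ and hence in $R$. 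Once that membership is secured, the rest is the same integral-closedness plus binomial-expansion routine already used above, and I would simply say "arguing as in the proof of Lemma~\ref{lem:coprime}" for that portion.
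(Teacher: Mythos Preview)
Your overall framework is right --- reduce to Lemma~\ref{lem:coprime} via Lemma~\ref{lem:Res2} --- but you have missed the one observation that collapses all of your ``parity bookkeeping'' worries, and as a result the proposal is far more complicated than it needs to be.

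The point is this: since $m$ is odd, $P_m(X,Y)$ is a \emph{symmetric} homogeneous polynomial of \emph{even} degree $m-1$. Any symmetric homogeneous polynomial of even degree $d$ in $X,Y$ is a $\Z$-polynomial in $(X+Y)^2$ and $XY$: by the fundamental theorem of symmetric functions it is a $\Z$-polynomial in $e_1=X+Y$ and $e_2=XY$, and each monomial $e_1^{\,i}e_2^{\,j}$ contributing has $i+2j=d$, forcing $i$ even, so $e_1^{\,i}e_2^{\,j}=((X+Y)^2)^{i/2}(XY)^j$. Hence $P_m(a,b)$ is already a $\Z$-polynomial in $(a+b)^2$ and $ab$, and therefore lies in $R$ outright; likewise $P_n(a,b)\in R$. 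Your assertion that ``we cannot freely use $P_m(a,b)$ and $P_n(a,b)$ as elements of $R$'' is simply false under the odd--odd hypothesis, and that hypothesis is precisely what makes it false.

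Once you see this, the proof is a one-line application of Lemma~\ref{lem:coprime} with $A=P_m$, $B=P_n$: both $A(a,b),B(a,b)\in R$ as just shown, $\Res(P_m,P_n)=\pm1$ by Lemma~\ref{lem:Res2}, and the hypotheses on $(a+b)^2,ab$ are those of Lemma~\ref{lem:coprime}. This is the intended argument (the paper says ``using Lemma~\ref{lem:coprime} and applying the same arguments as in \cite[Lemmas~2.4--2.7]{Sha}'' and omits the details). Your detour through $P_m(a,b)^2$, $P_m(a,b)P_n(a,b)$, multiplying resultant identities by extra factors, and re-running the integral-closedness argument by hand is not needed; it could perhaps be pushed through, but as written it is vague at exactly the points where precision would be required (e.g.\ which products you multiply by and why the result lands in $R$), and it obscures the reason the odd--odd case is the easy one.
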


\begin{proof}
Without loss of generality, we can assume $m \ge 3, n \ge 3$.
By assumption, both $a^2 + b^2$ and $ab$ are in $R$.
Moreover, we have that for any integer $k \ge 1$, $a^{2k} + b^{2k} \in R$.

Note that the polynomial $P_m(x,y)$ is homogeneous of even degree $m-1$ (since $m$ is odd) and symmetric in $x$ and $y$.
So, if $x^iy^j$ is a term in $P_m(x,y)$, then $x^jy^i$ is also a term in $P_m(x,y)$, and then assuming $i \le j$, we have
$$
a^i b^j + a^j b^i = (ab)^i (a^{j-i} + b^{j-i}) \in R,
$$
where we use the fact that $j - i$ is even (because $i + j = m-1$ is even).
Hence, we have that $P_m(a,b)$ is in $R$.
Similarly, $P_n(a,b)$ is also in $R$, because $n$ is also odd.

Now, the desired result follows directly from Lemmas~\ref{lem:Res2} and \ref{lem:coprime}.
\end{proof}

\begin{lemma}  \label{lem:PmPn-mix}
Let $a, b$ be defined as in Lemma~\ref{lem:coprime}.
Let $m,n$ be two positive coprime  integers such that  $m$ is odd and $n$ is even.
Then, the principal ideals $\langle P_m(a,b) \rangle$ and $\langle P_n(a,b)/(a + b) \rangle$ of $R$ are coprime.
\end{lemma}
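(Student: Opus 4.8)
The plan is to deduce this from Lemma~\ref{lem:coprime} applied to the pair of integer polynomials $P_m$ and $Q_n$, where $Q_n(X,Y) := P_n(X,Y)/(X+Y)$. The first point to notice is that, because $n$ is even, $\Phi_2(X,Y) = X+Y$ does divide $P_n(X,Y)$ in $\Z[X,Y]$: writing $n = 2\ell$ one has $X^n - Y^n = (X^2 - Y^2)P_\ell(X^2,Y^2)$, hence $P_n(X,Y) = (X+Y)P_\ell(X^2,Y^2)$, so that $Q_n(X,Y) = P_\ell(X^2,Y^2)$ is a homogeneous polynomial in $\Z[X,Y]$ of degree $n-2$ (equivalently, $Q_n = \prod_{d \mid n, d \ge 3}\Phi_d$).

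Before invoking Lemma~\ref{lem:coprime} I would check that $P_m(a,b)$ and $Q_n(a,b) = P_n(a,b)/(a+b)$ both lie in $R$. Since $(a+b)^2$ and $ab$ are in $R$, so are $a^2 + b^2 = (a+b)^2 - 2ab$ and $a^2b^2 = (ab)^2$. Then $Q_n(a,b) = P_\ell(a^2,b^2)$ is a symmetric polynomial in $a^2$ and $b^2$ with integer coefficients, hence lies in $\Z[a^2+b^2, a^2b^2] \subseteq R$. For $P_m(a,b)$, using that $m$ is odd one pairs the $k$-th and $(m-1-k)$-th terms of $P_m(a,b) = \sum_{k=0}^{m-1}a^{m-1-k}b^k$ and writes the result as a polynomial in $ab$ and $a^2+b^2$ with integer coefficients; thus $P_m(a,b) \in R$ as well. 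It is precisely the parity assumptions — $m$ odd, $n$ even — that make these two membership statements hold under the weaker hypothesis ``$(a+b)^2 \in R$'' rather than ``$a+b \in R$''.

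Next I would establish that $\Res(P_m, Q_n) = \pm 1$, without computing the resultant directly. Since $P_n = \Phi_2 \cdot Q_n$ and the resultant is multiplicative in each argument,
$$
\Res(P_m, P_n) = \Res(P_m, \Phi_2)\cdot\Res(P_m, Q_n).
$$
By Lemma~\ref{lem:Res2}, $\Res(P_m, P_n) = \pm 1$ because $m$ and $n$ are coprime; as all three resultants are ordinary integers, each factor on the right must be a unit of $\Z$, so in particular $\Res(P_m, Q_n) = \pm 1$. (If one prefers, a direct computation via Lemma~\ref{lem:Res} gives $\Res(P_m, \Phi_2) = \prod_{k=1}^{m-1}(\zeta_m^k + 1) = 1$, again using that $m$ is odd, but this is not needed.)

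With these preparations, $P_m$ and $Q_n$ are non-constant homogeneous polynomials in $\Z[X,Y]$ with resultant $\pm 1$ whose values at $(a,b)$ lie in $R$, so Lemma~\ref{lem:coprime} yields that $P_m(a,b)$ and $Q_n(a,b) = P_n(a,b)/(a+b)$ are coprime in $R$, as desired. The degenerate cases excluded by ``non-constant'' are $m=1$, where $P_1(a,b) = 1$, and $n=2$, where $P_2(a,b)/(a+b) = 1$; both are trivial since a unit is coprime to every element of $R$. I do not anticipate a genuine obstacle: the argument is essentially a verification of the hypotheses of Lemma~\ref{lem:coprime}, and the only mildly delicate ingredients are the factorization $\Phi_2 \mid P_n$ (valid exactly because $n$ is even) together with the membership $P_m(a,b) \in R$ (valid exactly because $m$ is odd), while the resultant condition comes for free from Lemma~\ref{lem:Res2}.
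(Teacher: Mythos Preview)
Your proposal is correct and follows essentially the same approach the paper indicates: the paper omits the proof but states that it follows from Lemma~\ref{lem:coprime} together with the arguments of \cite[Lemmas~2.4--2.7]{Sha}, which is exactly what you do by verifying that $P_m$ and $Q_n := P_n/\Phi_2$ satisfy the hypotheses of Lemma~\ref{lem:coprime}. Your handling of the membership $P_m(a,b), Q_n(a,b)\in R$ under the weaker assumption $(a+b)^2\in R$, the resultant computation via multiplicativity and Lemma~\ref{lem:Res2}, and the degenerate cases $m=1$, $n=2$ are all fine.
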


\begin{proof}
Without loss of generality, we can assume $m \ge 3, n \ge 2$.
Since $m$ is odd, as in the proof of Lemma~\ref{lem:PmPn-odd} $P_m(a,b)$ is in $R$.
For any odd integer $k \ge 1$, note that
$$
\frac{a^k + b^k}{a+b} = a^{k-1} - a^{k-2}b + \cdots - ab^{k-2} + b^{k-1}
$$
is homogeneous of even degree $k-1$ and is symmetric in $a$ and $b$, so it is in $R$.
Hence, for even $n$, since
\begin{align*}
\frac{P_n(a,b)}{a + b} = \frac{a^{n-1} + b^{n-1}}{a+b}  + ab \cdot \frac{a^{n-3} + b^{n-3}}{a+b} + \cdots
 + a^{\frac{n-2}{2}}b^{\frac{n-2}{2}} \cdot \frac{a + b}{a+b},
\end{align*}
we have that $P_n(a,b)/(a + b)$ is in $R$.

Denote $T_n(x,y) = P_n(x,y)/(x+y)$, which can be viewed as a polynomial over $\Z$.
It has been shown in the proof of \cite[Lemma 2.6]{Sha} that the resultant of $P_m(x,y)$ and $T_n(x,y)$ is equal to $\pm 1$.

Hence, using Lemma~\ref{lem:coprime} we obtain that the principal ideals $\langle P_m(a,b) \rangle$ and $\langle P_n(a,b)/(a + b) \rangle$ of $R$ are coprime.
\end{proof}

\begin{lemma}  \label{lem:Pmn}
Let $a, b$ be defined as in Lemma~\ref{lem:coprime}.
Let $m, n$ be two positive integers such that both $m$ and $n$ are odd.
Then, the principal ideals $\langle P_m(a^n,b^n) \rangle$ and $\langle (a^n + b^n)/(a + b) \rangle$ of $R$ are coprime.
\end{lemma}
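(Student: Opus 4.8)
The plan is to exhibit homogeneous polynomials $A,B\in\Z[X,Y]$ with $\Res(A,B)=\pm1$ such that $A(a,b)=P_m(a^n,b^n)$ and $B(a,b)=(a^n+b^n)/(a+b)$, and then quote Lemma~\ref{lem:coprime}. Since $m$ and $n$ are odd, the natural choice is
\[
A(X,Y)=P_m(X^n,Y^n)=\frac{X^{mn}-Y^{mn}}{X^n-Y^n},\qquad B(X,Y)=\frac{X^n+Y^n}{X+Y},
\]
both of which lie in $\Z[X,Y]$ and are homogeneous (of degrees $n(m-1)$ and $n-1$). When $m=1$ or $n=1$ the relevant polynomial equals $1$, so the corresponding element of $R$ is a unit and the statement is trivial; hence I may assume $m,n\ge3$, so that $A$ and $B$ are non-constant.

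Next I would verify that $A(a,b)$ and $B(a,b)$ genuinely lie in $R$. Writing $s_1=a+b$ and $s_2=ab$, the power sums $a^k+b^k$ are isobaric of weight $k$ in $s_1,s_2$; hence for odd $k$ every monomial contains an odd (in particular positive) power of $s_1$, so $(a^n+b^n)/(a+b)\in\Z[s_1^2,s_2]\subseteq R$ because $(a+b)^2$ and $ab$ lie in $R$ by hypothesis. Consequently $(a^n+b^n)^2=(a+b)^2\bigl((a^n+b^n)/(a+b)\bigr)^2\in R$. Since $P_m(u,v)$ is symmetric of even degree $m-1$ in $u,v$, it lies in $\Z[(u+v)^2,uv]$, and therefore $A(a,b)=P_m(a^n,b^n)\in\Z[(a^n+b^n)^2,(ab)^n]\subseteq R$.

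The heart of the argument is the identity $\Res(A,B)=\pm1$. From $X^N-Y^N=\prod_{d\mid N}\Phi_d(X,Y)$ and the fact that, for odd $n$, the divisors of $2n$ are the divisors of $n$ together with the numbers $2d$ for $d\mid n$, one obtains the cyclotomic factorisations $A=\prod_{d\mid mn,\,d\nmid n}\Phi_d$ and $B=\prod_{d\mid n,\,d\ge2}\Phi_{2d}$. Every index occurring in $A$ divides the odd number $mn$ and is hence odd, whereas every index in $B$ is even, so the two index sets are disjoint. By the multiplicativity of the resultant it then suffices to show $\Res(\Phi_d,\Phi_e)=\pm1$ for each odd $d\mid mn$ with $d\nmid n$ and each $e=2d'$ with $d'\mid n$ and $d'\ge2$. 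Recall that $\Res(\Phi_d,\Phi_e)$ is $\pm1$ unless one of $d,e$ divides the other with a prime-power quotient (and is $0$ only for $d=e$, which does not occur here). Since $d$ is odd and $e$ is even, $d/e$ is not an integer, while $e/d$, if an integer, is even, hence a prime power only when $e=2d$; but $e=2d$ together with $e=2d'$ forces $d=d'\mid n$, contradicting $d\nmid n$. So $\Res(\Phi_d,\Phi_e)=\pm1$ in every relevant case, whence $\Res(A,B)=\pm1$.

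With $A,B$ in hand, Lemma~\ref{lem:coprime} (applied to the elements $a,b$, which satisfy its hypotheses) immediately gives that $A(a,b)=P_m(a^n,b^n)$ and $B(a,b)=(a^n+b^n)/(a+b)$ are coprime in $R$, which is the assertion. I expect the only real obstacle to be the resultant step: one must pin down the precise value of the resultant of two homogeneous cyclotomic polynomials — which follows from Lemma~\ref{lem:Res} together with the classical evaluation of $\Res(\Phi_d,\Phi_e)$ — and then run the short divisibility check that eliminates the exceptional case $e=2d$, using crucially that both $m$ and $n$ are odd. The remaining ingredients are routine symmetric-function bookkeeping and the direct appeal to Lemma~\ref{lem:coprime}.
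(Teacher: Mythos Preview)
Your proposal is correct and follows exactly the template the paper intends: the paper omits the proof, pointing to \cite[Lemma~2.7]{Sha}, whose method is precisely to manufacture homogeneous $A,B\in\Z[X,Y]$ with $\Res(A,B)=\pm1$ specialising to the two elements in question and then invoke Lemma~\ref{lem:coprime}. Your choices $A=P_m(X^n,Y^n)$ and $B=(X^n+Y^n)/(X+Y)$, the symmetric-function check that $A(a,b),B(a,b)\in R$, and the cyclotomic factorisation leading to $\Res(A,B)=\pm1$ are all the right moves.

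Two small remarks. First, your resultant step appeals to Apostol's evaluation of $\Res(\Phi_d,\Phi_e)$, which the paper does not state; this is standard and is what lies behind \cite[Lemma~2.4]{FW} cited here as Lemma~\ref{lem:Res2}, so it is an acceptable external input. Second, your ``exceptional case $e=2d$'' never actually arises: if $d\mid e=2d'$ with $d$ odd then $d\mid d'\mid n$, contradicting $d\nmid n$, so in fact neither index divides the other and Apostol's theorem gives $\pm1$ immediately. Your argument is still valid, just slightly longer than necessary.
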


\begin{proof}
Without loss of generality, we can assume $m \ge 3, n \ge 3$.
As before, since $m$ and $n$ are odd, $P_m(a^n,b^n)$ and $(a^n + b^n)/(a + b)$ are indeed in $R$.

Define
$$
V_m(x,y) =P_m(x^n, y^n), \qquad  W_n(x,y) = \frac{x^n + y^n}{x+y}.
$$
It has been shown in the proof of \cite[Lemma 2.7]{Sha} that the resultant  $\Res(V_m, W_n) = \pm 1$.
So, the desired result follows directly from Lemma~\ref{lem:coprime}.
\end{proof}

\begin{lemma}  \label{lem:abn}
Let $a, b$ be defined as in Lemma~\ref{lem:coprime}.
Then, for any odd integer $n \ge 1$, the principal ideals  $\langle P_n(a,b) \rangle$ and $\langle (a+b)^2 \rangle$ of $R$  are coprime.
\end{lemma}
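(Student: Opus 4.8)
The plan is to reduce the assertion to the coprimality of the ideals $\langle ab\rangle$ and $\langle(a+b)^2\rangle$ that is already part of the hypotheses, the point being that the coprimality of \emph{elements} one gets from Lemma~\ref{lem:coprime} is too weak here: in a Dedekind domain that is not a principal ideal domain, two elements can be coprime without the principal ideals they generate summing to $R$. To recover the ideal statement I would pass to the overring $S=R[a,b]$, in which $a+b$ itself (not merely $(a+b)^2$) is available.

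Write $(a^n-b^n)/(a-b)=P_n(a,b)$. The case $n=1$ is trivial, since then $P_1(a,b)=1$ generates $R$, so assume $n\ge 3$ is odd. I would first record two facts. First, $P_n(a,b)\in R$: grouping $P_n(a,b)=\sum_{j=0}^{n-1}a^{\,n-1-j}b^{\,j}$ into the symmetric pairs $j\leftrightarrow n-1-j$ exhibits it as a polynomial with integer coefficients in $ab$ and in the power sums $a^{2k}+b^{2k}$ $(k\ge 1)$, and the latter all lie in $R$ because $a^2+b^2=(a+b)^2-2ab$ and $a^2b^2=(ab)^2$ do (as already observed in the proof of Lemma~\ref{lem:coprime}). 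Second, since $n$ is coprime to $2$, Lemma~\ref{lem:Res2} gives $\Res(P_n,P_2)=\pm 1$, and $P_2(X,Y)=X+Y$; as $P_n$ is non-constant for $n\ge 3$, Lemma~\ref{lem:Res} then produces polynomials $G,H\in\Z[X,Y]$ with
\[
G(X,Y)\,P_n(X,Y)+H(X,Y)\,(X+Y)=X^{\,n-1}
\]
(the exponent being $\deg P_n+\deg(X+Y)-1=n-1$).

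Now I would argue by contradiction: suppose some prime ideal $\q$ of $R$ divides both $\langle P_n(a,b)\rangle$ and $\langle(a+b)^2\rangle$, so that $P_n(a,b),(a+b)^2\in\q$. The ring $S=R[a,b]$ is a domain integral over $R$, because each of $a,b$ is a root of the monic polynomial $X^4-(a^2+b^2)X^2+a^2b^2\in R[X]$; hence by the lying-over property there is a prime ideal $\mathfrak{Q}$ of $S$ with $\mathfrak{Q}\cap R=\q$. Then $P_n(a,b),(a+b)^2\in\mathfrak{Q}$, and since $\mathfrak{Q}$ is prime, $a+b\in\mathfrak{Q}$. Evaluating the displayed identity at $(X,Y)=(a,b)$ — which is legitimate since $G(a,b),H(a,b)\in\Z[a,b]\subseteq S$ — yields $a^{\,n-1}\in\mathfrak{Q}$, hence $a\in\mathfrak{Q}$, hence $b=(a+b)-a\in\mathfrak{Q}$, and therefore $ab\in\mathfrak{Q}$. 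But $ab\in R$, so $ab\in\mathfrak{Q}\cap R=\q$; together with $(a+b)^2\in\q$ this makes $\q$ a common prime divisor of $\langle ab\rangle$ and $\langle(a+b)^2\rangle$, contradicting their coprimality in $R$. Hence $\langle(a^n-b^n)/(a-b)\rangle$ and $\langle(a+b)^2\rangle$ are coprime.

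The main obstacle is exactly the element-versus-ideal gap flagged in the remark preceding the lemma: applying Lemma~\ref{lem:coprime} with $A=P_n$ and $B=(X+Y)^2$ only shows that $P_n(a,b)$ and $(a+b)^2$ share no non-unit divisor, which does not by itself give coprime ideals. The device that closes the gap is to move to $S=R[a,b]$ and to use the resultant relation between $P_n$ and $X+Y$ — rather than between $P_n$ and $(X+Y)^2$ — so that it is $a+b$, which lives in $S$, that is forced into a prime of $S$ lying over $\q$; one can then drag $a$ and $b$ themselves into that prime and contract back to $R$. Care is needed only in the bookkeeping: one must use the $X+Y$ (not the $(X+Y)^2$) resultant identity, and one must check that the coefficients $G(a,b),H(a,b)$ really lie in $S$, which holds because $G,H$ have integer coefficients.
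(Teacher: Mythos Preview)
Your argument is correct and takes a genuinely different route from the paper's. The paper proceeds by establishing, via a short two-step induction, the explicit congruence
\[
\frac{X^{2k+1}-Y^{2k+1}}{X-Y}\equiv(-1)^k(XY)^k\pmod{(X+Y)^2}
\]
in $\Z[X,Y]$, and then observes that the quotient $C_n(X,Y)=\bigl(P_n(X,Y)-(-1)^{(n-1)/2}(XY)^{(n-1)/2}\bigr)/(X+Y)^2$ is symmetric homogeneous of even degree, so that $C_n(a,b)\in R$. The contradiction then plays out entirely inside $R$: if $\q$ contains $P_n(a,b)$ and $(a+b)^2$, the identity forces $(ab)^{(n-1)/2}\in\q$, hence $ab\in\q$. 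Your approach instead recycles the resultant machinery of Lemmas~\ref{lem:Res}--\ref{lem:Res2}, at the price that the cofactors $G(a,b),H(a,b)$ need not lie in $R$; you compensate by lifting $\q$ to a prime of the integral extension $S=R[a,b]$ via lying-over, where $a+b$ itself is available. The paper's method is more self-contained (no appeal to lying-over) and yields a slightly sharper polynomial identity; yours is more systematic, avoiding the ad hoc induction and making transparent why passing to $S$ is exactly what upgrades the element-coprimality of Lemma~\ref{lem:coprime} to the ideal-coprimality needed here. One cosmetic slip: the right-hand side of your resultant identity should carry a sign $\pm1$ since $\Res(P_n,P_2)=\pm1$, but this is immaterial to the argument.
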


\begin{proof}
Without loss of generality, we fix an odd integer $n \ge 3$.
Since $n$ is odd, by Lemma~\ref{lem:PmPn-odd} we have that $P_n(a,b)$ is  in $R$.
Moreover, it has been shown in the proof of \cite[Lemma 2.8]{Sha} that the resultant of the polynomials $P_n(x,y)$ and $(x+y)^2$ is equal to $\pm 1$.
Hence, the desired result follows from Lemma~\ref{lem:coprime}.
\end{proof}

\section{Proofs of Theorems~\ref{thm:strong3} and \ref{thm:primitive3}}
\label{sec:Lehmer}

We first prove Theorem~\ref{thm:strong3}. 

\begin{proof}[Proof of Theorem~\ref{thm:strong3}]
Let $j = \gcd(m,n)$. Then, $m/j$ and $n/j$ are coprime. 

First, assume that both $m$ and $n$ are even. Then, $j$ is also even.
Clearly,
$$
U_m = U_j P_{m/j}(\lambda^j, \eta^j), \quad U_n = U_j P_{n/j}(\lambda^j, \eta^j).
$$
By assumption, both  $\lambda^j + \eta^j$ and $\lambda^j \eta^j$ are in $R$, and moreover the principal ideals $\langle \lambda^j + \eta^j \rangle$ and $\langle \lambda^j \eta^j \rangle$ of $R$ are coprime
(as in the last paragraph of the proof of Lemma~\ref{lem:coprime}).
Hence, it follows from Lemma~\ref{lem:PmPn2} that $P_{m/j}(\lambda^j, \eta^j)$ and $P_{n/j}(\lambda^j, \eta^j)$ are coprime in $R$,
and thus, $\gcd(U_m, U_n) = U_j$.

Now, assume that both $m$ and $n$ are odd. Then, $j$ is also odd.
Clearly,
$$
U_m = U_j P_{m/j}(\lambda^j, \eta^j), \quad U_n = U_j P_{n/j}(\lambda^j, \eta^j).
$$
Note that both $(\lambda^j + \eta^j)^2$ and $\lambda^j \eta^j$ are in $R$, and moreover the principal ideals $\langle (\lambda^j + \eta^j)^2 \rangle$ and $\langle \lambda^j \eta^j \rangle$ of $R$ are coprime.
So, it follows from Lemma~\ref{lem:PmPn-odd} that $P_{m/j}(\lambda^j, \eta^j)$ and $P_{n/j}(\lambda^j, \eta^j)$ are coprime in $R$,
and so, $\gcd(U_m, U_n) = U_j$.

Finally, without loss of generality, we assume that  $m$ is odd and $n$ is even.
Then, $j$ is odd. Clearly,
\begin{align*}
& U_m = U_j P_{m/j}(\lambda^j, \eta^j), \\
& U_n = U_j \cdot \frac{P_{n/j}(\lambda^j, \eta^j)}{\lambda^j + \eta^j} \cdot \frac{\lambda^j + \eta^j}{\lambda + \eta}.
\end{align*}
Using Lemma~\ref{lem:PmPn-mix} we have that $P_{m/j}(\lambda^j, \eta^j)$ and $P_{n/j}(\lambda^j, \eta^j)/(\lambda^j + \eta^j)$ are coprime in $R$.
In addition, by Lemma~\ref{lem:Pmn} we obtain that $P_{m/j}(\lambda^j, \eta^j)$ and $(\lambda^j + \eta^j)/(\lambda + \eta)$ are coprime in $R$.
Hence, $\gcd(U_m, U_n) = U_j$.
This completes the proof.
\end{proof}

To prove Theorem~\ref{thm:primitive3}, we need to make some more preparations.

Recall that $p$ is the characteristic of $R$.
As usual, denote by $v_\q(h)$ the maximal power to which a prime ideal $\q$ (of $R$) divides $h \in R$.

In this section, let $M$ be the fraction field of $R$.
The elements $\lambda$ and $\eta$ have been defined at the beginning of Section~\ref{sec:main}.
By assumption, $M(\lambda)$ is an algebraic extension generated by $\lambda$ over $M$ and also containing $\eta$.
For any prime ideal $\q$ of $R$, as usual $v_\q$ induces a non-Archimedean valuation of $M$. 
In fact, $v_\q(\alpha) \ge 0$ for any $\alpha \in R$, and $\q = \{\alpha \in R: \, v_\q(\alpha) >0\}$. 
This valuation can be extended to the field $M(\lambda)$ (still denoted by $v_\q$); see, for instance, \cite[Theorem 3.1.2]{EP}.

\begin{lemma}  \label{lem:vU}
Let $\q$ be a prime ideal of $R$ dividing $U_n$ for some $n \ge 3$.
Then, for any integer $m \ge 1$, if either $p=0$ or $p \nmid m$, we have
$v_\q (U_{mn}) = v_\q (U_n)$.
\end{lemma}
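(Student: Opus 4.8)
The plan is to fix a prime ideal $\q$ dividing $U_n$ with $n \ge 3$, extend the valuation $v_\q$ to $M(\lambda)$, and then carry out a careful analysis of how the power of $\q$ in $U_{mn}$ relates to that in $U_n$ using the factorization $\lambda^{mn} - \eta^{mn} = (\lambda^n)^m - (\eta^n)^m$. First I would reduce to the Lucas-type setting: write $A = \lambda^n$, $B = \eta^n$, so that $U_{mn}$ differs from $(A^m - B^m)/(A - B)$ by a controlled factor (depending on the parities of $m$, $n$, $mn$), and $U_n$ differs from $(A - B)/(\lambda - \eta)$ or $(A - B)/(\lambda^2 - \eta^2)$ similarly. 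The key structural input is that $\q \mid U_n$ forces $\lambda \equiv \eta$ modulo the extended valuation (i.e. $v_\q(\lambda - \eta) > 0$ after suitable normalization), once we know $\q$ does not divide $\lambda \eta$; this non-division follows from the coprimality hypotheses via Lemmas like \ref{lem:abn} or directly from the assumption that $\langle (\lambda+\eta)^2 \rangle$ and $\langle \lambda \eta \rangle$ are coprime, together with the congruences \eqref{eq:2k}–\eqref{eq:2k+1} already established.

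The heart of the argument is a lifting-the-exponent style computation: with $t = v_\q(A - B) > 0$ (positive because $\q \mid U_n$ and $\q \nmid \lambda - \eta$ up to the normalization issue, or because $\q \mid A - B$ directly), I would show $v_\q\!\big((A^m - B^m)/(A-B)\big) = v_\q(m) + (\text{correction})$, and since $p \nmid m$ we have $v_\q(m) = 0$ whenever $\q$ lies over a prime of residue characteristic dividing $m$ — but here the residue characteristic is $p$ (or $\q$ contains no prime integer at all when $p = 0$), and $p \nmid m$, so in fact the ``$m$-part'' contributes nothing and $v_\q\!\big((A^m-B^m)/(A-B)\big) = 0$. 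Writing $A^m - B^m = (A - B) \prod_{k=1}^{m-1}(A - \zeta_m^k B)$, each factor $A - \zeta_m^k B$ reduces modulo $\q$ to $A(1 - \zeta_m^k) \not\equiv 0$ since $A \equiv B \not\equiv 0$ and $1 - \zeta_m^k$ is a unit in the relevant valuation ring (its norm divides $m$, which is prime to $p$); hence $v_\q(A^m - B^m) = v_\q(A - B)$ exactly. Feeding this back through the definitions of $U_{mn}$ and $U_n$, and checking that the extra factors $(\lambda - \eta)$, $(\lambda^2 - \eta^2)$, $(\lambda^n - \eta^n)$ etc.\ appearing in the various parity cases are themselves $\q$-units (which is exactly where Lemma~\ref{lem:abn} and Lemmas~\ref{lem:PmPn-mix}, \ref{lem:Pmn} enter), yields $v_\q(U_{mn}) = v_\q(U_n)$.

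I expect the main obstacle to be the normalization of the extended valuation and the bookkeeping across the four parity cases (both $m,n$ odd; $m$ odd $n$ even; $m$ even $n$ odd; both even, though the last cannot occur for coprime-type reasons but $m, n$ need not be coprime here). In particular, when $n$ is even one has $U_n = (\lambda^n - \eta^n)/(\lambda^2 - \eta^2)$ and $mn$ is also even, so the denominators match up cleanly; but when $n$ is odd and $m$ is even, $U_n$ has denominator $\lambda - \eta$ while $U_{mn}$ has denominator $\lambda^2 - \eta^2$, introducing a factor $\lambda + \eta$ that must be shown to be a $\q$-unit — this is precisely the content of Lemma~\ref{lem:abn}, which was singled out in the excerpt as needing coprime ideals rather than merely coprime elements, signalling that this case is the delicate one. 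The remaining care is to ensure all the auxiliary quantities ($a^{2k}+b^{2k}$, the symmetric cofactors $C_n$, etc.) genuinely lie in $R$ so that $v_\q$ applies to them, which the preliminary lemmas already guarantee.
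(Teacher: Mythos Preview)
Your plan is correct and follows the same overall architecture as the paper's proof: extend $v_\q$ to $M(\lambda)$, show $v_\q(\lambda)=v_\q(\eta)=0$ from the coprimality hypothesis (the paper does this explicitly as its first step, arguing that $v_\q(\eta)>0$ together with $\q\mid U_n$ would force $v_\q(\lambda)>0$ and hence $\q\mid(\lambda+\eta)^2$ and $\q\mid\lambda\eta$), and then run a lifting-the-exponent argument, with Lemma~\ref{lem:abn} handling exactly the delicate parity case you identify ($n$ odd, $m$ even, where a stray factor $\lambda+\eta$ must be shown to be a $\q$-unit).

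The difference lies in the LTE step itself. You propose the cyclotomic factorization $A^m-B^m=(A-B)\prod_{k=1}^{m-1}(A-\zeta_m^k B)$ and argue that each nontrivial factor is a $\q$-unit because $\prod_k(1-\zeta_m^k)=m$ is a unit in $R$ when $p\nmid m$. The paper instead expands $\lambda^{mn}=\bigl(\eta^n+(\lambda-\eta)U_n\bigr)^m$ (respectively with $\lambda^2-\eta^2$ in place of $\lambda-\eta$ when $n$ is even) by the binomial theorem, obtaining $U_{mn}$ (or $(\lambda+\eta)U_{mn}$) as $m\eta^{n(m-1)}U_n$ plus terms divisible by $U_n^2$, and concludes by the ultrametric inequality. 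Your route is a bit more conceptual but requires a further extension of the valuation to a field containing $\zeta_m$; the paper's binomial route stays inside $M(\lambda)$ and is slightly more self-contained. Neither argument needs Lemmas~\ref{lem:PmPn-mix} or~\ref{lem:Pmn}; only Lemma~\ref{lem:abn} is invoked.
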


\begin{proof}
First, note that $\lambda, \eta$ are both integral over the ring $R$
(because they satisfy the equation $x^4 - (\lambda^2 + \eta^2)x^2 + \lambda^2 \eta^2 = 0$ over $R$).
So, we have $v_\q(\lambda) \ge 0$ and  $v_\q(\eta) \ge 0$.

Assume that $v_\q(\eta) > 0$.
Note that by definition, either $\lambda^n = \eta^n + (\lambda - \eta) U_n$, or $\lambda^n = \eta^n + (\lambda^2 - \eta^2) U_n$.
We also note that $v_\q (U_n) > 0$ by assumption.
So, we obtain $v_\q(\lambda^n) > 0$,
which gives $v_\q (\lambda) > 0$.
Thus,
$$
v_\q(\lambda + \eta) >0, \qquad v_\q(\lambda\eta) > 0.
$$
This contradicts the assumption that
the principal ideals $\langle (\lambda + \eta)^2 \rangle$ and $\langle \lambda \eta \rangle$ of $R$ are coprime.
So, we must have  $v_\q(\eta) = 0$.

Now, suppose that $n$ is odd.
Then, $U_n = (\lambda^n - \eta^n)/(\lambda - \eta)$, and so
$$
\lambda^n = \eta^n + (\lambda - \eta)U_n.
$$
Then,  we have
$$
\lambda^{mn} = \big( \eta^n +  (\lambda - \eta)U_n \big)^m
= \eta^{mn} + \sum_{i=1}^{m} \binom{m}{i} (\lambda - \eta)^i U_n^{i} \eta^{n(m-i)},
$$
which implies
$$
\frac{\lambda^{mn} - \eta^{mn}}{\lambda - \eta}
= m\eta^{n(m-1)}U_n + \sum_{i=2}^{m}\binom{m}{i} (\lambda - \eta)^{i-1}\eta^{n(m-i)} U_n^{i}.
$$
Hence, we have that for odd $m$
$$
U_{mn} = m\eta^{n(m-1)}U_n + \sum_{i=2}^{m}\binom{m}{i} (\lambda - \eta)^{i-1}\eta^{n(m-i)} U_n^{i},
$$
and for even $m$
$$
(\lambda + \eta)U_{mn} = m\eta^{n(m-1)}U_n + \sum_{i=2}^{m}\binom{m}{i} (\lambda - \eta)^{i-1}\eta^{n(m-i)} U_n^{i}.
$$
Besides, since $n$ is odd and $v_\q (U_n) > 0$, using Lemma~\ref{lem:abn} we obtain $v_\q (\lambda + \eta) = 0$.
Hence, the desired result follows from the ultrametric inequality for non-Archimedean valuations.
Here we need to use $v_\q(\eta) = 0$ and also $v_\q(m) = 0$ (since we are in the function field setting and $p \nmid m$ if $p>0$).

Finally, assume that $n$ is even.
Then, as the above, for any integer $m \ge 1$ we have
$$
U_{mn} = m\eta^{n(m-1)}U_n + \sum_{i=2}^{m}\binom{m}{i} (\lambda^2 - \eta^2)^{i-1}\eta^{n(m-i)} U_n^{i}.
$$
The desired result now follows similarly.
\end{proof}

Now, we need to investigate some properties of the elements  $\Phi_n(\lambda,\eta)$ with $n \ge 3$.

\begin{lemma}  \label{lem:Phin}
For any integer $n \ge 3$, $\Phi_n(\lambda, \eta)$ is in $R$. 
\end{lemma}
\begin{proof}
Note that for any integer $n \ge 3$, the polynomial $\Phi_n(x,y)$ is homogeneous of even degree $\varphi(n)$ and is symmetric in $x$ and $y$.
Then, as in the proof of Lemma~\ref{lem:PmPn-odd}, we have that $\Phi_n(\lambda,\eta)$ is in $R$.
\end{proof}

The following lemma asserts that when the characteristic $p=0$ or $p \nmid n$,
any non-primitive prime divisor of the term $U_n$ does not divide $\Phi_n(\lambda,\eta)$. 

\begin{lemma}  \label{lem:nonprimitive}
Let $n$ be an integer such that $n \ge 3$ and $p \nmid n$ if $p > 0$.
Then, any non-primitive prime divisor of the term $U_n$ does not divide $\Phi_n(\lambda,\eta)$.
\end{lemma}

\begin{proof}
As in \cite{Ward}, we define the sequence $(Q_k)_{k \ge 1}$ of polynomials by $Q_1 = 1, Q_2 = 1$, and
$$
Q_k(x,y) = \Phi_k(x,y), \quad k = 3, 4, \ldots.
$$
Clearly, for integer $n \ge 3$, we have
$$
U_n = \prod_{j \mid n} Q_j(\lambda, \eta).
$$
From Lemma~\ref{lem:Phin}, the elements $Q_j(\lambda, \eta),  j\ge 1$ are all in $R$. 
By the M{\"o}bius inversion we obtain
\begin{equation}  \label{eq:Qn}
Q_n(\lambda, \eta) = \prod_{j\mid n} U_j^{\mu(n/j)}, 
\end{equation}
where $\mu$ is the M{\"o}bius function. 

Now, suppose that the characteristic $p > 0$.
Let $\q$ be a prime divisor of $U_n$ which is not primitive, where $p \nmid n$ by assumption.
Let $m$ be the smallest positive integer such that $\q \mid U_m$.
Noticing the choice of $\q$, we have $m < n$.
Moreover, using Theorem~\ref{thm:strong3} we obtain that $\gcd(U_m, U_n) = U_{\gcd(m,n)}$, and so $m \mid n$ (noticing the choice of $m$).
Similarly, for any positive integer $j$, we obtain $v_\q(U_j) = 0$ if $m \nmid j$.
Besides, by Lemma~\ref{lem:vU}, for any integer $j$ with $p \nmid j$ we have
$$
v_\q(U_{jm}) = v_\q(U_m).
$$
Hence, using \eqref{eq:Qn} and noticing $p \nmid n$, we obtain
\begin{align*}
v_\q(Q_n(\lambda,\eta)) & = \sum_{j\mid n} \mu(n/j) v_\q(U_j) \\
& = \sum_{jm \mid n} \mu(n/(jm)) v_\q(U_{jm}) \\
& = \sum_{j \mid n/m} \mu(n/(jm)) v_\q(U_m) \\
& = v_\q(U_m) \sum_{j \mid n/m} \mu(n/(jm)) = 0, 
\end{align*}
where the last identity follows from the well-known fact $\sum_{j \mid k} \mu(j) = 0$ for any integer $k \ge 2$.
So, any non-primitive prime divisor of $U_n$ does not divide $Q_n(\lambda,\eta)$
(that is, $\Phi_n(\lambda,\eta)$, since $n \ge 3$).

The proof for the case $p = 0$ follows exactly the same way.
\end{proof}

The following lemma is a further property about the elements  $\Phi_n(\lambda, \eta)$ 
when $R$ is the integral closure of a function field. 
Recall that $p$ is the characteristic of $R$, and $\varphi$ is Euler's totient function.

\begin{lemma}  \label{lem:nonunit}
Let $R$ be the integral closure of a function field, and let $d$ be the degree of $R$. 
Assume that both $\lambda$ and $\eta$ are in $R$. 
Then, $\Phi_n(\lambda, \eta)$ is not a unit in $R$ whenever $n$ satisfies \eqref{eq:boundn}.
\end{lemma}
\begin{proof}
We assume that $R$ is the integral closure of a function field $E$. 
So, $E$ is a finite extension of $K(x)$. 
Let $d$ be the degree of $R$ (that is, $d = [E:K(x)]$).  
Let $\overline{K}$ be the algebraic closure of $K$ contained in the fixed algebraic closure of $E$. 
As usual, let $\overline{K}E$ be the composition of the fields $\overline{K}$ and $E$. 
Put $d^{\prime} = [\overline{K}E : \overline{K}(x)]$.
Then, we have 
\begin{equation}  \label{eq:degree}
d^{\prime} \le d.
\end{equation}
Note that $R$ is the integral closure of $K[x]$ in $E$. 
Let $R^{\prime}$ be the integral closure of $\overline{K}[x]$ in $\overline{K}E$. 
Then, $R \subseteq R^{\prime}$. 

Let $\mathbb{U}^{\prime}$ be the group of units in $R^{\prime}$. 
Let $r^{\prime}$ be the rank of  $\mathbb{U}^{\prime}$ modulo $\overline{K}^*$. 
Note that the function field $\overline{K}E$ (over $\overline{K}(x)$) has at most $d^{\prime}$ valuations at infinity. Combining this (see \cite[Corollary 1 in page 243]{Rosen}) with \eqref{eq:degree}, we have 
\begin{equation}  \label{eq:rank}
r^{\prime} \le d^{\prime} - 1 \le d-1. 
\end{equation}

By the assumption on $\lambda$ and $\eta$ at the beginning of Section~\ref{sec:main}, 
we know that the principal ideals $\langle (\lambda + \eta)^2 \rangle$ and $\langle \lambda\eta \rangle$ of $R$ are  coprime ideals which are not both equal to $R$.
Then, $(\lambda + \eta)^2$ and $\lambda\eta$ can not be both in $K$. 
Moreover, at least one of $\lambda$ and $\eta$ is transcendental over $K$ (that is, not in $\overline{K}$), 
and 
\begin{equation}  \label{eq:notin}
\lambda/ \eta \not\in \overline{K}.
\end{equation}
Indeed, if $\lambda/ \eta \in \overline{K}$, then $\lambda = a\eta$ for some constant $a \in \overline{K}$, 
and then the principal ideals $\langle (\lambda + \eta)^2 \rangle$ and $\langle \lambda\eta \rangle$ of $R$ are not coprime, which contradicts with our assumption. 

For the integer $n$ satisfying \eqref{eq:boundn}, we have either $p=0$ or $p \nmid n$, and so there exist primitive $n$-th roots of unity in $\overline{K}$. 
For simplicity, we denote by $\zeta$ a primitive $n$-th root of unity in $\overline{K}$. 
We also define the subset $T$ of integers as follows:
\begin{equation*}
T = \{k \in \Z:\, 1\le k \le n, \gcd(k,n)=1\}. 
\end{equation*}
Clearly,  $|T| = \varphi(n)$.  
Then, by definition, we have
$$
\Phi_n(\lambda, \eta) = \prod_{k \in T} (\lambda - \zeta^k \eta).
$$
Notice that we have assumed either $p=0$, or $p > 0$ and $p \nmid n$, and so we have 
\begin{equation} \label{eq:zetaij}
\zeta^i \ne \zeta^j  \quad \textrm{for any distinct $i,j \in T$}.
\end{equation}

Now, we consider the unit equation 
\begin{equation} \label{eq:uniteq}
u_1 + u_2 =1 \quad \textrm{ with $(u_1, u_2) \in (\mathbb{U}^{\prime})^2$}.
\end{equation}
A solution $(u_1, u_2)$ of \eqref{eq:uniteq} is said to be \textit{trivial} if $u_1/u_2 \in \overline{K}^*$. 
Clearly, a solution $(u_1, u_2)$ is trivial if and only if $u_1, u_2 \in \overline{K}^*$.
When the characteristic $p > 0$, two distinct solutions $(u_1, u_2), (u_1^{\prime}, u_2^{\prime})$  of \eqref{eq:uniteq} are said to be \textit{equivalent}  
if there exists a positive integer $m$ such that
$$
\textrm{either } u_1 = (u_1^{\prime})^{p^m}, u_2 = (u_2^{\prime})^{p^m}, 
\qquad \textrm{or } u_1^{\prime} = u_1^{p^m}, u_2^{\prime} = u_2^{p^m}.
$$

In this paragraph, we assume that $\Phi_n(\lambda, \eta)$ is a unit in $R$ and $\varphi(n) \ge 3$. 
Then, we try to construct some non-trivial and also  inequivalent (if $p>0$) solutions to \eqref{eq:uniteq}. 
By definition, $\Phi_n(\lambda, \eta)$ is also a unit in $R^{\prime}$. 
Since $\lambda, \eta$ and $\zeta$ are all in $R^{\prime}$, we know that $\lambda - \zeta^k \eta$ is also a unit  in $R^{\prime}$ for each $k \in T$. 
Then, we choose three distinct integers $i, j, k \in T$ and construct the following solution to $\eqref{eq:uniteq}$:
\begin{equation} \label{eq:ijk}
\left( \frac{\zeta^k - \zeta^j}{\zeta^i - \zeta^j} \cdot \frac{\lambda - \zeta^i \eta}{\lambda - \zeta^k \eta}, \quad
\frac{\zeta^i - \zeta^k}{\zeta^i - \zeta^j} \cdot \frac{\lambda - \zeta^j \eta}{\lambda - \zeta^k \eta }\right),
\end{equation}
where the solution is well-defined due to \eqref{eq:notin} and \eqref{eq:zetaij}. 
In view of \eqref{eq:notin} we know that the solution \eqref{eq:ijk} is non-trivial. 
In addition, we choose another three distinct integers $i^{\prime}, j^{\prime}, k^{\prime} \in T$ and construct another non-trivial solution to \eqref{eq:uniteq}:
\begin{equation}  \label{eq:ijk'}
\left( \frac{\zeta^{k^{\prime}} - \zeta^{j^{\prime}}}{\zeta^{i^{\prime}} - \zeta^{j^{\prime}}} \cdot \frac{\lambda - \zeta^{i^{\prime}} \eta}{\lambda - \zeta^{k^{\prime}} \eta}, \quad
\frac{\zeta^{i^{\prime}} - \zeta^{k^{\prime}}}{\zeta^{i^{\prime}} - \zeta^{j^{\prime}}} \cdot \frac{\lambda - \zeta^{j^{\prime}} \eta}{\lambda - \zeta^{k^{\prime}} \eta} \right). 
\end{equation}
Now, assume that the solutions \eqref{eq:ijk} and \eqref{eq:ijk'} are the same. Then, we have 
\begin{equation}  \label{eq:solu1}
\frac{\zeta^k - \zeta^j}{\zeta^i - \zeta^j} \cdot \frac{\lambda - \zeta^i \eta}{\lambda - \zeta^k \eta}
=
\frac{\zeta^{k^{\prime}} - \zeta^{j^{\prime}}}{\zeta^{i^{\prime}} - \zeta^{j^{\prime}}} \cdot \frac{\lambda - \zeta^{i^{\prime}} \eta}{\lambda - \zeta^{k^{\prime}} \eta}
\end{equation}
and 
\begin{equation}  \label{eq:solu2}
\frac{\zeta^i - \zeta^k}{\zeta^i - \zeta^j} \cdot \frac{\lambda - \zeta^j \eta}{\lambda - \zeta^k \eta}
=
\frac{\zeta^{i^{\prime}} - \zeta^{k^{\prime}}}{\zeta^{i^{\prime}} - \zeta^{j^{\prime}}} \cdot \frac{\lambda - \zeta^{j^{\prime}} \eta}{\lambda - \zeta^{k^{\prime}} \eta}. 
\end{equation} 
In \eqref{eq:solu1}, if 
$$
\frac{\zeta^k - \zeta^j}{\zeta^i - \zeta^j} \ne  \frac{\zeta^{k^{\prime}} - \zeta^{j^{\prime}}}{\zeta^{i^{\prime}} - \zeta^{j^{\prime}}}, 
$$
then from \eqref{eq:solu1} we can get that $\lambda/\eta$ satisfies a quadratic equation with coefficients in $\overline{K}$, and so $\lambda/\eta \in \overline{K}$, which contradicts with  \eqref{eq:notin}. 
Hence, we must have 
$$
\frac{\zeta^k - \zeta^j}{\zeta^i - \zeta^j} = \frac{\zeta^{k^{\prime}} - \zeta^{j^{\prime}}}{\zeta^{i^{\prime}} - \zeta^{j^{\prime}}}. 
$$
Similarly, from \eqref{eq:solu2} we must have 
$$
\frac{\zeta^i - \zeta^k}{\zeta^i - \zeta^j}  = \frac{\zeta^{i^{\prime}} - \zeta^{k^{\prime}}}{\zeta^{i^{\prime}} - \zeta^{j^{\prime}}}. 
$$
So, we have 
$$
\frac{\lambda - \zeta^i \eta}{\lambda - \zeta^k \eta} = \frac{\lambda - \zeta^{i^{\prime}} \eta}{\lambda - \zeta^{k^{\prime}} \eta}, 
\qquad 
\frac{\lambda - \zeta^j \eta}{\lambda - \zeta^k \eta} = \frac{\lambda - \zeta^{j^{\prime}} \eta}{\lambda - \zeta^{k^{\prime}} \eta}.  
$$
We rewrite the above as 
$$
\frac{\lambda/\eta - \zeta^i }{\lambda/\eta - \zeta^k} = \frac{\lambda/\eta - \zeta^{i^{\prime}}}{\lambda/\eta - \zeta^{k^{\prime}}}, 
\qquad 
\frac{\lambda/\eta - \zeta^j}{\lambda/\eta - \zeta^k} = \frac{\lambda/\eta - \zeta^{j^{\prime}}}{\lambda/\eta - \zeta^{k^{\prime}}};  
$$
and so we have 
$$
(\zeta^i + \zeta^{k^{\prime}}) \frac{\lambda}{\eta} - \zeta^{i + k^\prime} = (\zeta^{i^\prime} + \zeta^k) \frac{\lambda}{\eta} - \zeta^{i^\prime + k}
$$
and 
$$
(\zeta^j + \zeta^{k^{\prime}}) \frac{\lambda}{\eta} - \zeta^{j + k^\prime} = (\zeta^{j^\prime} + \zeta^k) \frac{\lambda}{\eta} - \zeta^{j^\prime + k}.
$$
Then, combining this with \eqref{eq:notin}, we obtain 
$$
\zeta^{i+ k^{\prime}} = \zeta^{i^{\prime}+k},
\qquad   
\zeta^i + \zeta^{k^{\prime}} = \zeta^{i^{\prime}} + \zeta^k, 
$$
and
$$
\zeta^{j+ k^{\prime}} = \zeta^{j^{\prime}+k},
\qquad   
\zeta^j + \zeta^{k^{\prime}} = \zeta^{j^{\prime}} + \zeta^k. 
$$
Thus, we get $\zeta^{i- j} = \zeta^{i^{\prime}- j^{\prime}}$ (which is not equal to 1 due to \eqref{eq:zetaij}) and 
$$
\zeta^i - \zeta^j = \zeta^{i^{\prime}} - \zeta^{j^{\prime}}, 
\textrm{ that is }
\zeta^j(\zeta^{i-j} -1)  = \zeta^{j^{\prime}}(\zeta^{i^{\prime} -j^{\prime}} - 1), 
$$
which implies $\zeta^j = \zeta^{j^{\prime}}$, and so (noticing $j, j^{\prime} \in T$ and using \eqref{eq:zetaij})
$$
j = j^{\prime}. 
$$
Then, we further obtain 
$$
k = k^{\prime}, \qquad i = i^{\prime}. 
$$
So, we have 
$$
(i,j,k) = (i^{\prime},j^{\prime},k^{\prime}).
$$
Hence, the solutions \eqref{eq:ijk} and \eqref{eq:ijk'} are the same if and only if 
\begin{equation}  \label{eq:ijk=}
(i,j,k) = (i^{\prime},j^{\prime},k^{\prime}).
\end{equation}
Therefore, if $\Phi_n(\lambda, \eta)$ is a unit in $R$, we can construct at least 
\begin{equation}  \label{eq:solution}
\varphi(n)(\varphi(n)-1)(\varphi(n)-2)
\end{equation}
non-trivial solutions of the form \eqref{eq:ijk} to the unit equation \eqref{eq:uniteq}. 

Furthermore, when $p>0$, the solutions \eqref{eq:ijk} and \eqref{eq:ijk'} are inequivalent 
(by definition, applying similar arguments as the above and using \eqref{eq:notin} and \eqref{eq:zetaij}). 
Therefore, if $\Phi_n(\lambda, \eta)$ is a unit in $R$ and $p>0$, we can construct at least 
\begin{equation}  \label{eq:solution'}
\varphi(n)(\varphi(n)-1)(\varphi(n)-2)
\end{equation}
inequivalent non-trivial solutions of the form \eqref{eq:ijk} for the unit equation \eqref{eq:uniteq}. 

However, when the characteristic $p=0$, by \cite[the discussion below (1.5)]{EZ} (see also \cite[Theorem 7.4.1]{EG} or \cite[Corollary 4]{Zannier}), 
we deduce that the unit equation \eqref{eq:uniteq} has at most 
\begin{equation} \label{eq:solution1}
3^{2r^{\prime}} \textrm{ non-trivial solutions $(u_1, u_2) \in (\mathbb{U}^{\prime})^2$.}
\end{equation}
So, combining \eqref{eq:solution1}, \eqref{eq:solution} with \eqref{eq:rank} and in view of \eqref{eq:boundn}, we get that when $p=0$ and if 
\begin{equation*}
\varphi(n)(\varphi(n)-1)(\varphi(n)-2) > 3^{2d-2}, 
\end{equation*}
then $\Phi_n(\lambda, \eta)$ is not a unit. This completes the proof for the case when $p=0$.

In addition, when the characteristic $p > 0$, 
by Theorem 1.2 in \cite{KP} (see also \cite[Theorem 2.5.2]{Koy} for more details), we know that the unit equation \eqref{eq:uniteq} has at most 
\begin{equation} \label{eq:solution2}
31 \cdot 19^{2r^{\prime}} \textrm{ inequivalent non-trivial solutions $(u_1, u_2) \in (\mathbb{U}^{\prime})^2$.}
\end{equation}
So, combining \eqref{eq:solution2}, \eqref{eq:solution'} with \eqref{eq:rank} and in view of \eqref{eq:boundn}, we get the desired result for the case when $p > 0$. 
\end{proof}

\begin{remark}
We don't know whether it can hold that for some $\lambda$ and $\eta$,  $\Phi_n(\lambda, \eta)$ is not a unit for any $n \ge 3$. 
However, for any $n \ge 3$, we first fix a unit (say $u$) and such an element $\eta$ in $R$, and then find such an element $\lambda$ by solving the equation $\Phi_n(\lambda, \eta) = u$ (assuming $R$ is large enough to contain $\lambda$), and so in this case  $\Phi_n(\lambda, \eta)$ is a unit.
\end{remark}

Now, we are ready to prove Theorem~\ref{thm:primitive3}.

\begin{proof}[Proof of Theorem~\ref{thm:primitive3}]
Recall that $Q_1 = 1, Q_2 = 1$, and $Q_k(x,y) = \Phi_k(x,y), \quad k = 3, 4, \ldots$.
For the integer $n$ satisfying \eqref{eq:boundn}, we have $n \ge 3$, and so we have
$$
U_n = \prod_{j \mid n} Q_j(\lambda, \eta).
$$
Thus, any primitive prime divisor of the term $U_n$  must divide $Q_n(\lambda, \eta)$ (that is, $\Phi_n(\lambda, \eta)$).

In addition, by Lemma~\ref{lem:nonprimitive} we know that
 any non-primitive prime divisor of $U_n$ does not divide $\Phi_n(\lambda,\eta)$.

Hence, the primitive prime divisors of $U_n$ are exactly the prime divisors of
 $\Phi_n(\lambda,\eta)$ (if they exist).

Note that here $R$ is the integral closure of a function field.
So, when the integer $n$ satisfies \eqref{eq:boundn}, by Lemmas~\ref{lem:Phin} and \ref{lem:nonunit}  we know that $\Phi_n(\lambda, \eta)$ is in $R$ and it is not a unit in $R$. So, $\Phi_n(\lambda,\eta)$ has a prime divisor in $R$.

Therefore, collecting the above deductions, we conclude that
the term  $U_n$ has a primitive prime divisor when $n$ satisfies \eqref{eq:boundn}.
\end{proof}

\begin{remark}   \label{rem:Un}
In the proof of Theorem~\ref{thm:primitive3}, we obtain more:
when $R$ is the integral closure of a function field, the \textit{primitive part}
(that is, the product of all the primitive prime divisors to their respective powers)
of $U_n$ is exactly the principal ideal $\langle \Phi_n(\lambda, \eta) \rangle$, where $n$ satisfies \eqref{eq:boundn}.
\end{remark}

\begin{remark}
In Theorem~\ref{thm:primitive3} (and also in Theorems~\ref{thm:primitive1} and \ref{thm:primitive2}), 
we only consider the case of function fields of transcendental degree one over $K$. 
In fact, in general a function field is defined as a finite extension of $K(x_1, \ldots, x_h)$ (not necessarily for $h=1$), where $x_1, \ldots, x_h$ are independent variables over $K$. 
Indeed, both the results in \cite{EZ} and \cite{KP} still hold in this general case.  
But, in the above proof of Theorem~\ref{thm:primitive3}, we need Lemma~\ref{lem:nonprimitive} whose proof relies on the condition that $R$ is a Dedekind domain (because Theorem~\ref{thm:strong3} is used there). 
Moreover, the proof of Theorem~\ref{thm:strong3} requires  Lemma~\ref{lem:coprime}, which indeed needs $R$ ot be a Dedekind domain. However, when $R$ is the integral closure of a general function field, $R$ might be not a Dedekind domain; for instance $R=K[x_1, \ldots, x_h], h \ge 2$ (this case in fact was studied in \cite{Sha}). 
\end{remark}

\section{Proofs of Theorems~\ref{thm:strong2} and \ref{thm:primitive2}}
\label{sec:Lucas}

Both proofs follow from the same arguments as in \cite[Section 4]{Sha}
with only minor changes and from Theorems~\ref{thm:strong3} and \ref{thm:primitive3}.

\begin{proof}[Proof of Theorem~\ref{thm:strong2}]
Fix positive integers $m, n$ with $j = \gcd(m,n)$.
If either both $m,n$ are even, or both $m,n$ are odd,
it follows directly from Theorem~\ref{thm:strong3} (choosing $\lambda = \alpha, \eta = \beta$ there) that $\gcd(L_m, L_n) = L_j$. Indeed, when both $m$ and $n$ are even, $j$ is also even, and then by choosing $\lambda=\alpha$ and $\eta=\beta$ we have that $L_m=(\alpha+\beta)U_m$, $L_n=(\alpha+\beta)U_n$ and $L_j=(\alpha+\beta)U_j$; and moreover by Theorem~\ref{thm:strong3} we have $\gcd(U_m, U_n)=U_j$, and thus we have $(\alpha+\beta)\gcd(U_m, U_n)=(\alpha+\beta)U_j$, that is, $\gcd(L_m, L_n)=L_j$. 

Now, without loss of generality, assume that $m$ is even and $n$ is odd.
Then, $j$ is odd. By Theorem~\ref{thm:strong3} we obtain
$$
\gcd\Big(\frac{\alpha^m - \beta^m}{\alpha^2 - \beta^2}, \frac{\alpha^n - \beta^n}{\alpha - \beta} \Big)
= \frac{\alpha^j - \beta^j}{\alpha - \beta}.
$$
Using Lemma~\ref{lem:abn} we know that $(\alpha^n - \beta^n)/(\alpha - \beta)$ and $\alpha + \beta$ are coprime in $R$.
Hence, we obtain $\gcd(L_m, L_n) = L_j$.
This completes the proof.
\end{proof}

\begin{proof}[Proof of Theorem~\ref{thm:primitive2}]
 For any odd integer $n \ge 3$, by Lemma~\ref{lem:abn} and noticing $ \Phi_n(\alpha,\beta) \mid P_n(\alpha,\beta)$, we have that the principal ideals $\langle \Phi_n(\alpha,\beta) \rangle$ and $\langle \alpha + \beta \rangle$ of $R$ are coprime.

Now, fix an even integer $n \ge 4$ such that $p \nmid n$ if the characteristic $p>0$.
Suppose that there exists a prime ideal $\q$ of $R$ dividing both $\Phi_n(\alpha, \beta)$ and $\Phi_2(\alpha,\beta) = \alpha + \beta$.
This means that the polynomial $x^n - y^n = \prod_{j \mid n} \Phi_j(x,y)$, viewed it as a polynomial over the quotient field $R / \q R$, has a multiple root (that is, $(\alpha, \beta)$).
However, since either $p=0$ or $p \nmid n$, we know that $x^n - y^n$ is in fact a simple polynomial.
Hence, this leads to a contradiction, and so $\langle \Phi_n(\alpha,\beta) \rangle$ and $\langle \alpha + \beta \rangle$ are coprime ideals in  $R$.

Hence, we always have that for any integer $n \ge 3$ with $p \nmid n$ if $p>0$, the ideals $\langle \Phi_n(\alpha,\beta) \rangle$ and $\langle \alpha + \beta \rangle$ are coprime in $R$.

Therefore, by construction we directly obtain from Remark~\ref{rem:Un} (choosing $\lambda = \alpha, \eta = \beta$ there) that
the primitive part of $L_n$ is exactly the ideal $\langle \Phi_n(\alpha,\beta) \rangle$, where $n$ satisfies \eqref{eq:boundn}. 

Moreover, when $n$ satisfies \eqref{eq:boundn}, 
 by Lemmas~\ref{lem:Phin} and \ref{lem:nonunit} (choosing $\lambda = \alpha, \eta = \beta$ there), we know that $\Phi_n(\alpha,\beta)$ is in $R$ and it is not a unit in $R$.

Hence, the term  $L_n$ has a primitive prime divisor when $n$ satisfies \eqref{eq:boundn}.
\end{proof}

\begin{remark}  \label{rem:Ln}
From the proof, we know that when $R$ is the integral closure of a function field, the primitive part
of $L_n$ is exactly the principal ideal $\langle \Phi_n(\alpha, \beta) \rangle$, where $n$ satisfies \eqref{eq:boundn}.
\end{remark}

\section{Proofs of Theorems~\ref{thm:strong1} and \ref{thm:primitive1}}
\label{sec:Zig}

First, Theorem~\ref{thm:strong1} follows directly from Theorem~\ref{thm:strong2}.

For the proof of Theorem~\ref{thm:primitive1}, we apply the same arguments as in \cite[Proof of Theorem 1.6]{Sha}
and use Theorem~\ref{thm:primitive2} and Remark~\ref{rem:Ln}.

\begin{proof}[Proof of Theorem~\ref{thm:primitive1}]
Fix an integer $n$ satisfying \eqref{eq:boundn}.  
Taking $\alpha = f$ and $\beta = g$ in Theorem~\ref{thm:primitive2} and noticing Remark~\ref{rem:Ln},
we know that the primitive part of the term $ (f^n - g^n) / (f-g)$ is  the ideal $\langle \Phi_n(f, g) \rangle$.
As in the proof of Theorem~\ref{thm:primitive2} and noticing $\Phi_1(f,g)=f-g$, we obtain that the ideals $\langle \Phi_n(f,g) \rangle$ and $\langle f-g \rangle$ are coprime in $R$.
Hence, the primitive part of the term $F_n = f^n - g^n$ is the ideal $\langle \Phi_n(f,g) \rangle$. 
Then, the desired result follows from Lemma~\ref{lem:nonunit} (choosing $\lambda = f, \eta = g$ there). 
\end{proof}

\begin{remark}
From the proof, we know that when $R$ is the integral closure of a function field,
the primitive part of $F_n$ is exactly
the principal ideal $\langle \Phi_n(f,g) \rangle$, where $n$ satisfies \eqref{eq:boundn}.
\end{remark}

\section*{Acknowledgement}
The authors would like to thank the referee for careful reading of their paper and valuable comments.
For the research,  Xiumei Li was supported by the National Science Foundation of China Grant No.12001312; 
and Min Sha was supported by the Key Laboratory of Applied Mathematics of Fujian Province University (Putian University) (NO. SX202201) and the Guangdong Basic and Applied Basic Research Foundation (No. 2025A1515010635).

\end{document}